\DeclareMathAlphabet{\mathcal}{OMS}{cmsy}{m}{n}
\theoremstyle{definition} 
\newcommand{\PP}{\mathbb{P}}
\renewcommand{\to}{\longrightarrow}
\DeclareMathOperator{\length}{length}
\renewcommand{\span}{\operatorname{span}}
\DeclareMathOperator{\EJ}{EJ}
\DeclareMathOperator{\RJ}{RJ}
\DeclareMathOperator{\Sec}{Sec}
\newtheorem{theorem}{Theorem}
\newtheorem{corollary}[theorem]{Corollary}
\newtheorem{lemma}[theorem]{Lemma}
\newtheorem{proposition}[theorem]{Proposition}
\newtheorem*{definition}{Definition}
\newtheorem*{remark}{Remark}
\newtheorem*{example}{Example}
\title{Generalizing M. Dale's results from secants to joins}
\author{J. Beckmann}
\begin{document}
\maketitle
\thispagestyle{empty}
\noindent Abstract: Magnar Dale's paper ``Terracini's lemma and the secant variety of a curve" contains various facts about secant varieties, nearly all of whose proofs can immediately be extended to the situation of embedded joins of varieties. This note provides the necessary details on how to do so, and as an application shows how to use this information to calculate the degree of the canonical map from the ruled join down to the embedded join.
\section*{Introduction}
We are not the first to extend Dale's results to embedded joins; \AA dlandsvik (\cite{dlandsvik1987JoinsAH}) extended Dale's version of Terracini's lemma to joins, and Ballico (\cite{EBallico}) partially extended one of Dale's other results to joins. The primary reason why we wish to extend the rest of Dale's results is their application in explicit calculations using the refined B\'ezout theorem (depicted on the cover of \cite{flenner2013joins}, see inside that book for details). This theorem requires calculating the degree of the map $\pi:\RJ(X,Y)\to \EJ(X,Y)$, the canonical projection of the ruled join variety $\RJ(X,Y)\subseteq\PP^{2n+1}$ to the embedded join variety $\EJ(X,Y)\subseteq\PP^n$ (each of which is defined in \S1). We can phrase some of the results of our main theorem (Theorem \ref{main}) as 
\begin{theorem}
    Let $X,Y\subset\PP^n$ be closed subschemes, not a constrained pair, and assume $\dim(\EJ(X,Y))=\dim(X)+\dim(Y)+1$. Let $L$ be a general line in the ruling of $\EJ(X,Y)$, and let $m_X:=\length(X\cap L)$ and $m_Y:=\length(Y\cap L)$. Let $z\in L$ be a general point, and let $b$ equal the total number of lines in the ruling of $\EJ(X,Y)$ passing through $z$. Then
    $$\deg(\pi)=m_Xm_Yb$$
\end{theorem}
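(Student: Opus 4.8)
The plan is to compute $\deg(\pi)$ as the length of the scheme-theoretic fiber $\pi^{-1}(z)$ over a general point $z\in\EJ(X,Y)$, organized according to the $b$ ruling lines through $z$. Since $\dim(\RJ(X,Y))=\dim(X)+\dim(Y)+1=\dim(\EJ(X,Y))$ by hypothesis, $\pi$ is dominant and generically finite, so $\deg(\pi)$ is this general fiber length, provided $\pi$ is generically separable (which I expect the non-constrained hypothesis, together with the earlier results, to supply). The first observation I would record is that $\pi$ is induced by the linear projection $\PP^{2n+1}\dashrightarrow\PP^n$, $[x:y]\mapsto[x+y]$, whose center is the linear space $\{\,y=-x\,\}$. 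A ruling line of $\RJ(X,Y)$ joining $(x,0)$ to $(0,y)$ avoids this center whenever $x,y$ are distinct points of $\PP^n$, so $\pi$ carries each such line isomorphically onto the corresponding ruling line $\overline{xy}$ of $\EJ(X,Y)$; moreover a general point of $\RJ(X,Y)$ lies on a unique ruling line, since its two coordinate blocks recover $x\in X$ and $y\in Y$. Consequently, for general $z$ the fiber $\pi^{-1}(z)$ is identified with the set of pairs $(x,y)\in X\times Y$ collinear with $z$.

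To make this identification scheme-theoretic I would use the linear projection $p_z\colon\PP^n\dashrightarrow\PP^{n-1}$ away from $z$, which is a morphism on $X$ and on $Y$ once $z\notin X\cup Y$ (generically the case). Collinearity of $x,y,z$ is precisely the condition $p_z(x)=p_z(y)$, so the locus of collinear pairs is the fiber product $X\times_{\PP^{n-1}}Y$ formed from $p_z|_X$ and $p_z|_Y$, and this is isomorphic to $\pi^{-1}(z)$ for general $z$. Now a point of $p_z(X)\cap p_z(Y)\subset\PP^{n-1}$ is exactly the direction of a line $L_d$ through $z$ meeting both $X$ and $Y$; such a line connects a point of $X$ to a point of $Y$ and passes through $z$, hence is one of the ruling lines of $\EJ(X,Y)$ through $z$, and conversely. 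By the definition of $b$ there are exactly $b$ such points $d$, and over each the fiber of $X\times_{\PP^{n-1}}Y$ is the product $(X\cap L_d)\times(Y\cap L_d)$, of length $\length(X\cap L_d)\cdot\length(Y\cap L_d)$.

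The final step is to invoke the generality of $z$ to identify each of the $b$ lines $L_d$ with a general line $L$ in the ruling, so that $\length(X\cap L_d)=m_X$ and $\length(Y\cap L_d)=m_Y$, and to ensure that the $b$ directions are reduced points of $p_z(X)\cap p_z(Y)$ carrying the product scheme structure over each. Summing the contributions then yields
$$\length\bigl(\pi^{-1}(z)\bigr)=\sum_{d}m_Xm_Y=b\,m_Xm_Y,$$
and hence $\deg(\pi)=m_Xm_Yb$.

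I expect the main obstacle to be precisely this multiplicity bookkeeping. One must establish that the scheme-theoretic fiber has length exactly $b\,m_Xm_Y$, which rests on two transversality facts that the generality of $z$ should guarantee but which require proof: that the $b$ line-directions are reduced points of the intersection $p_z(X)\cap p_z(Y)$ in $\PP^{n-1}$, and that along each of these lines the intersection lengths attain their generic values $m_X$ and $m_Y$ rather than jumping at a special member of the ruling. Coupled with the separability of $\pi$ furnished by the non-constrained hypothesis, needed to equate the fiber length with the field degree $\deg(\pi)$, these genericity inputs, which the note's earlier Terracini-type and dimension results should provide, reduce the theorem to the clean geometric count above.
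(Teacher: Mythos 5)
Your proposal takes a genuinely different route from the paper. The paper never examines an individual fiber of $\pi$: it factors $\pi$ through the birational map $\Phi\colon M(X,Y)\dashrightarrow\RJ(X,Y)$ and reads the degree off diagram \ref{Dale1.7} by multiplicativity, $\deg(\pi)=\deg\bigl(M(X,Y)\to N(X)\bigr)\cdot\deg\bigl(N(X)\to W(X)\bigr)\cdot\deg(\alpha_X)=m_Y\cdot 1\cdot m_X\deg(\beta)$, the factors being supplied by the fiber-square Proposition of \S3, the birationality of $\overline{pr}_{12}$, and the Theorem of \S2.3, while $\deg(\beta)=b$ and the identification of field degrees with generic point counts come from separability (Corollary \ref{Dale3.4}, via the non-constrained hypothesis). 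Because every number there is the degree of a dominant map of irreducible varieties, the transversality issues you flag as ``the main obstacle'' never have to be checked at a particular $z$; they are absorbed once and for all into Lemma \ref{Dale3.5} and Corollary \ref{Dale3.4}. Your direct count is more elementary and self-contained, and your two flagged gaps are fillable from the paper's own results: the $b$ lines through a general $z$ are themselves general join lines because the ``bad'' join lines of the Lemma in \S2.1 form a proper closed subset of $JL(X,Y)$, so they sweep out a proper closed subset of $\EJ(X,Y)$ (this is exactly where the hypothesis $\dim\EJ(X,Y)=\dim X+\dim Y+1$ enters); and the general fiber of $\pi$ is reduced with exactly $\deg(\pi)$ points because $\pi$ is separably generated, which is part (a) of Theorem \ref{main}.

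However, one step is genuinely wrong as written: $\pi^{-1}(z)$ is not the whole fiber product $X\times_{\PP^{n-1}}Y$. That fiber product also contains every pair $(w,w)$ with $w\in X\cap Y$ (correspondingly, $p_z(X)\cap p_z(Y)$ contains all of $p_z(X\cap Y)$, not only the $b$ ruling directions), and such pairs contribute nothing to $\pi^{-1}(z)$: the ruling line of $\RJ(X,Y)$ over $(w,w)$ is contracted by $\pi$ to the point $w\neq z$. So the identification must be with the open locus $\{x\neq y\}$, and the count of $m_Xm_Y$ per line additionally requires each of the $b$ lines through $z$ to avoid $X\cap Y$, so that all $m_Xm_Y$ ordered pairs are genuine. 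If neither variety contains the other this is harmless for general $z$: join lines meeting $X\cap Y$ lie in $\EJ(X,X\cap Y)\cup\EJ(X\cap Y,Y)$, a proper closed subset of $\EJ(X,Y)$. But if $X=Y$ (the secant case, which the paper does not exclude), every join line meets $X\cap Y$, the per-line count drops to $m_X(m_Y-1)$, and, for example, for the twisted cubic one finds $\deg(\pi)=2$ rather than $m_Xm_Yb=4$. Your fiber-product picture actually makes this delicacy visible; in the paper it is hidden inside the assertions that $M(X,Y)$ is irreducible and $\Phi$ is birational, which fail when $X=Y$. With the diagonal excised and the additional hypothesis that a general join line avoids $X\cap Y$ (automatic unless $X\subseteq Y$ or $Y\subseteq X$), your argument is complete.
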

\noindent (Constrained pairs are defined in \S1.) The numbers $m_X$, $m_Y$, and $b$ correspond to the degrees of certain maps $\alpha_X$, $\alpha_Y$, and $\beta$, respectively, which are based on the maps $\alpha$ and $\beta$ used by Dale.

In \S1 we rebuild the constructions from Dale's paper (\S1 of \cite{Dale}) with appropriate modifications for our current purposes, and define the new terminology we need to properly state the results of \S\S2 and 3. In \S2 we state, prove, and discuss our generalizations of Dale's results. In \S3 we prove our motivating fact, Theorem \ref{main}. In \S4, we outline some remaining questions.

\begin{remark} This note is not self-contained, and was written to be read alongside \cite{Dale}. We make ubiquitous reference to how things were accomplished in \cite{Dale}, we base much of our notation on that of \cite{Dale}, and have elected to exclude certain proofs, as they are identical to Dale's after swapping appropriate notation.
\end{remark}

\section{Notation}
\subsection{Basic maps and correspondences}
Assume $k$ is algebraically closed. Let $X,Y\subseteq\PP^n$ be closed, reduced, irreducible subschemes, and exclude the cases where $X=Y$ is a linear space. The embedded join $\EJ(X,Y)\subseteq\PP^n$ is the closure of the union of all lines $\overline{x,y}$ for $x\in X$, $y\in Y$, and $x\neq y$. Let $\PP^{2n+1}$ have homogeneous coordinates $[a_0,\dots,a_n,b_0,\dots,b_n]$, and let $X'\subseteq V(a_0,\dots,a_n)\subset\PP^{2n+1}\supset V(b_0,\dots,b_n)\supseteq Y'$ be two isomorphically embedded copies of $X$ and $Y$, embedded in the indicated complementary $n$-planes. We call $\RJ(X,Y):=\EJ(X',Y')$ the \textit{ruled join} of $X$ and $Y$. The map $\begin{tikzcd}pr:\PP^{2n+1}\arrow[r,dashed]&\PP^n\end{tikzcd}$ given by projection from the $n$-plane $V(a_0-b_0,\dots,a_n-b_n)$ restricts to the canonical map $\begin{tikzcd}\pi:\RJ(X,Y)\arrow[r,dashed]&\EJ(X,Y)\end{tikzcd}$.

Let $G=G(1,n)$, the Grassmannian of lines in $\PP^n$. Denote by $\Gamma(X,Y)\subset\PP^n\times\PP^n\times G$ the closure of the graph of the morphism $\begin{tikzcd}s:X\times Y\arrow[r,dashed]&G\end{tikzcd}$, given by $s(x,y)=\overline{x,y}$. $\Gamma(X,Y)$ is the blow up of $X\times Y$ at $\Delta\cap (X\times Y)$, where $\Delta\subset\PP^n\times\PP^n$ is the diagonal. $\Gamma(X,Y)$ is irreducible and its dimensions is $\dim(X)+\dim(Y)$.

Put $JL(X,Y)=pr_3(\Gamma(X,Y))\subset G$; this plays the role of Dale's $S(X)$. The general fiber of $pr_3|_{\Gamma(X,Y)}=\overline{pr}_3:\Gamma(X,Y)\to JL(X,Y)$ is finite except in the case that $X=Y$ is a linear space, and so $\dim(JL(X,Y))=\dim(X)+\dim(Y)$ and $JL(X,Y)$ is irreducible. Then $\displaystyle{\EJ(X,Y)=\bigcup_{L\in JL(X,Y)}L}$, and we call lines in $JL(X,Y)$ \textit{join lines} of $\EJ(X,Y)$.

Let $I=\{(p,l):p\in l\}\subset\PP^n\times G$ be the incidence correspondence, and put $JB(X,Y)=(\overline{pr}_2)^{-1}(JL(X,Y))\subseteq I$, where $\overline{pr}_2:I\to G$. This will play of role of Dale's $SB(X)$. If we give $JL(X,Y)$ the reduced scheme structure, then $JB(X,Y)$ is reduced and irreducible of dimension $\dim(X)+\dim(Y)+1$. Then $\EJ(X,Y)=pr_1(JB(X,Y))$, and so $\EJ(X,Y)$ is irreducible of dimension at most $\dim(X)+\dim(Y)+1$.

We must now diverge more clearly from Dale. Consider the diagram
$$\begin{tikzcd}
    \mathrm{Bl}_{X\cap Y}(X\times Y)\arrow[rr,"\cong"]\arrow[d,hook]&&\arrow[d,hook]\Gamma(X,Y)\\
    \arrow[dd,"\lambda"]
    \mathrm{Bl}_\Delta(\PP^n\times\PP^n)
    \arrow[rr,"\cong"]\arrow[dr,"\pi"]&&
    \arrow[dd,"\overline{pr}_{23}"]\arrow[ddrd,bend left=20,"\overline{pr}_{13}"]
    \Gamma(\PP^n,\PP^n)\arrow[dl,"\overline{pr}_{12}"]\arrow[r, phantom, sloped, "\subset"]&\PP^n\times\PP^n\times G\\
    &\PP^n\times\PP^n&\\
    \PP(\Omega_{\PP^n}^1)\arrow[rrrd,bend right=10,"\cong"]\arrow[rr,"\cong"]&&I\\
    &&&I
\end{tikzcd}$$
where $\pi$ is the blowdown map and $\lambda$ is the canonical projection of ($\mathrm{Bl}_\Delta(\PP^n\times\PP^n)$ as a bundle over $\PP(\Omega_{\PP^n}^1)$). Because our first and second components of $\Gamma(X,Y)$ are not assumed to be equal as in the case of secants, we need to consider both
$$N(X)=\overline{pr}^{-1}_{13}(pr_{13}(\Gamma(X,Y))\hspace{.5in}\mathrm{and}\hspace{.5in}N(Y)=\overline{pr}^{-1}_{23}(pr_{23}(\Gamma(X,Y))$$
as separate components. These will partially play the role of Dale's $M(X)$. Set theoretically, $N(X)=\{(x,z,l):z\in l\in JB(X,Y),x\in l\cap X\}$ and similarly for $N(Y)$.
Let $W(X)=\overline{pr}_{12}(N(X))$ and $W(Y)=\overline{pr}_{12}(N(Y))$. These play the role of Dale's $S(X,i)$.

It follows that $W(X)$ and $W(Y)$ are reduced and irreducible of dimension $\dim(X)+\dim(Y)+1$. These also come with set-theoretic descriptions, 
$$W(X)=\{(x,z):x\in X, z\in L\in JL(X,Y)\}\hspace{.25in} \mathrm{and}\hspace{.25in}W(Y)=\{(z,y):y\in Y, z\in L\in JL(X,Y)\}.$$
We also have $\EJ(X,Y)=pr_2(W(X))=pr_1(W(Y))$. It is also not hard to see that $pr_{23}(N(X))=JB(X,Y)=pr_{13}(N(Y))$.

Consider the projections
$$\begin{tikzcd}[column sep=small]
    &\PP^n\times\PP^n\times\PP^n\times G\arrow[dl,"pr_{124}"]\arrow[dr,"pr_{234}"]\\
    \PP^n\times\PP^n\times G&&\PP^n\times\PP^n\times G
\end{tikzcd}$$
Let $M(X,Y)=pr_{124}^{-1}(N(X))\cap pr_{234}^{-1}(N(Y))$; $M(X,Y)$ is the closure of the set $\{(x,z,y,l):z\in l\in JB(X,Y),x\in l\cap X, y\in l\cap Y\}$, it is irreducible, and $\dim(M(X,Y))=\dim(X)+\dim(Y)+1$. We have a rational map $\begin{tikzcd}\Phi:M(X,Y)\arrow[r,dashed]&\RJ(X,Y)\end{tikzcd}$ given by $\Phi(x,z,y,l)=a[x,0]+b[0,y]$, where we treat $l$ as $\PP^1$ with homogeneous coordinates $[a,b]$, with $x$ at $[1,0]$ and $y$ at $[0,1]$.

This gives us the commutative diagram
\begin{equation}
    \begin{tikzcd}\label{Dale1.7}
        &M(X,Y)\arrow[dr,"\overline{pr}_{124}"]\arrow[dl,swap,"\overline{pr}_{234}"]\arrow[ddrr,bend left=30,"\Phi"]
        \\
        N(X)
        \arrow[dd,"\overline{pr}_{12}",swap]
        \arrow[dr,"\overline{pr}_{23}"]&&
        N(Y)\arrow[dl,swap,"\overline{pr}_{13}"]
        \arrow[dd,"\overline{pr}_{12}"]
        \\
        &JB(X,Y)\arrow[dd,"\beta"]&&\RJ(X,Y)\arrow[ddll,bend left=30,"\pi"]\\
        W(X)\arrow[dr,swap,"\alpha_X"]&&
        W(Y)\arrow[dl,"\alpha_Y"]\\
        &\EJ(X,Y)
    \end{tikzcd}
\end{equation}
The two squares which share the morphism $\beta$ will behave almost identically to Dale's diagram (1.7). The remaining square is in fact a fiber square (this can be gleaned from the set theoretic descriptions of $N(X)$ and $N(Y)$); we will leverage that fact in \S3. The map $\overline{pr}_{12}$ induces isomorphisms between open dense subsets of $N(X)$ and $W(X)$, and of $N(Y)$ and $W(Y)$. The map $\Phi$ is also birational.
\subsection{The general join line of $\EJ(X,Y)$}
The general secant line to a variety may meet the variety more than twice. The following definition is the ``correct" generalization of this concept.
\begin{definition}
    We say that a join line is an $(m_X,m_Y)$-join line of $\EJ(X,Y)$ if $\length(X\cap L)=m_X$ and $\length(Y\cap L)=m_Y$. We say that the pair $X,Y\subseteq\PP^n$ are $(m_X,m_Y)$-joined if a general join line is an $(m_X,m_Y)$-join line.
\end{definition}
\begin{figure}[h]
    \centering
    \includegraphics[width=.8\textwidth]{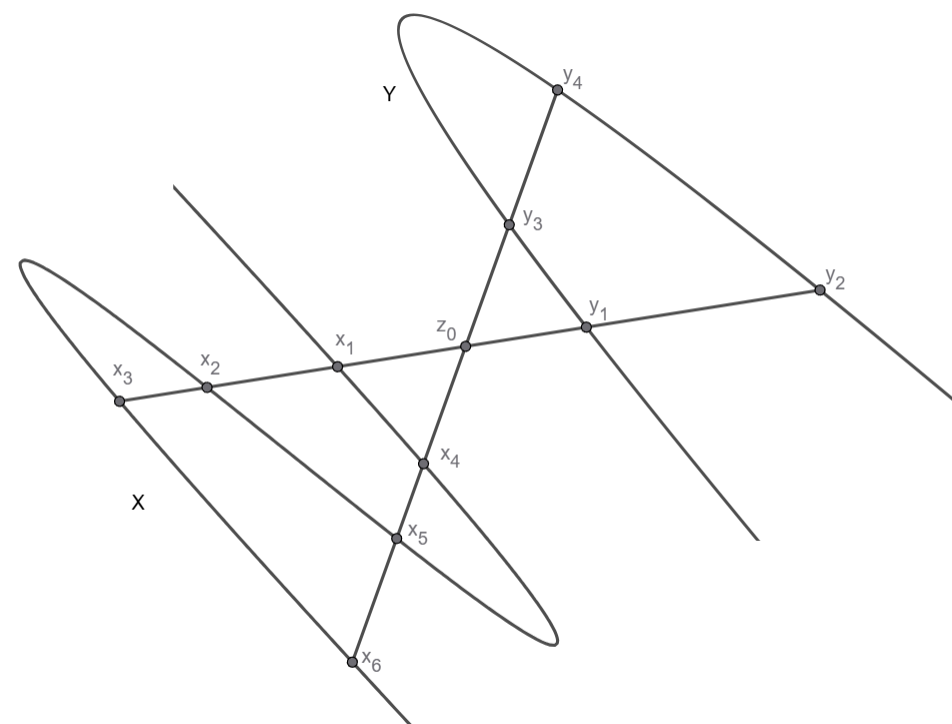}
    \caption{Sketch of curves $X$ and $Y$ with a pair of (3,2)-join lines intersecting at $z_0$.}
    \label{figure}
\end{figure}
It is straightforward to show that there is a pair of positive integers $(m_X,m_Y)$ such that general join lines are $(m_X,m_Y)$-join lines, and this fact was first stated by Ballico (\cite{EBallico}). It is also easy to see that $\EJ(X,Y)$ is (1,$m_Y$)-joined if and only if $X\neq\Sec Y$, and symmetrically for $m_Y$.

\begin{example}
    Assume that $\EJ(X,Y)$ is (3,2)-joined, and that general points of $\EJ(X,Y)$ lie on two distinct join lines. (This situation is sketched in figure \ref{figure}.) We will see later that this makes $\deg(\alpha_X)=3\cdot2=6$, $\deg(\alpha_Y)=2\cdot2=4$, and $\deg(\beta)=2$.
    
    Recalling the morphism $\begin{tikzcd}\pi:\RJ(X,Y)\arrow[r,dashed]&\EJ(X,Y)\end{tikzcd}$, it is important to note that $\pi^{-1}(z_0)$ contains 12 points, one preimage on each of the 6 lines $\overline{[x_1,0],[0,y_1]}$, $\overline{[x_2,0],[0,y_1]}$, $\overline{[x_3,0],[0,y_1]}$, $\overline{[x_1,0],[0,y_2]}$, $\overline{[x_2,0],[0,y_2]}$, $\overline{[x_3,0],[0,y_2]}\subseteq\RJ(X,Y)\subseteq\PP^{2n+1}$, and similarly for the other join line. This demonstrates that $\deg(\pi)\neq\left(m_X+m_Y\right)\deg(\beta)$, contrary to what one might expect.
\end{example}
\subsection*{1.3 Strange and constrained varieties}
A nondegenerate subvariety $X\subseteq\PP^n$ is called \textit{strange} for a linear space $L\subseteq\PP^n$ if for all $x\in X$, the embedded tangent space of $X$ at $x$, which we will write $T_{X,x}$, intersect $L$. $X$ is called \textit{strongly strange} for $L$ if each tangent space contains $L$. The generalizations we will show require the following 
\begin{definition}
    We call a pair $X,Y\subseteq\PP^n$ a (strongly) $L$-strange pair, or simply a strange pair, if all tangent spaces of $X$ and of $Y$ contain $L$.
\end{definition}
This definition allows both members of a non-strange pair to be individually strange, as long as they are not strange for the same $L$. We also recover the definition of strange: $X$ is strange if and only if $X,X$ is a strange pair.

We can define a \textit{constrained pair} in a similar fashion. Let
$$t(X,Y)=\min\{\dim(T_{X,x}\cap T_{Y,y}):(x,y)\in X\times Y\}$$
There is an open dense subset $U$ of $X\times Y$ where $\dim(T_{X,x}\cap T_{Y,y})=t(X,Y)$ for all $(x,y)\in U$. By Terracini's lemma (our Lemma \ref{terracini} below),
$$\dim(X)+\dim(Y)-t(X,Y)=\dim\span(T_{X,x},T_{Y,y})\leq\dim\EJ(X,Y)$$
We say $X,Y$ are a constrained pair if $\dim(\EJ(X,Y))>\dim(X)+\dim(Y)-t(X,Y)$.

\cite{Dale} records some elementary facts about $t(X)$; we get fewer interesting elementary facts about $t(X,Y)$. We have a version of Dale's elementary fact (a): $t(X,Y)\leq \min\{\dim(X),\dim(Y)\}$, and $t(X,Y)=\min\{\dim(X),\dim(Y)\}$ if and only if $X$ is strongly strange for $L$ and $Y\subseteq L$, or vice versa. However, we do not get a satisfactory version of Dale's elementary fact (b), and the rest of the properties Dale lists are contingent on a statement like (b) limiting the size of $t(X,Y)$. It seems that we get fewer of these properties due to the greater variability that a second variety $Y$ provides.

\section{Generalizing Dale's Results}
\subsection{Generalities on Joins}
The following was first stated in \cite{EBallico}. No proof was given, as the difference can be overcome on intuition alone, but we will sketch the idea nonetheless.
\begin{lemma}
    Assume that $\EJ(X,Y)$ is $(m_X,m_Y)$-joined. Then there exists an open, dense subset $U\subset JL(X,Y)$ such that for any $l\in U$, $l$ intersects $X$ transversely in exactly $m_X$ points, and $l$ intersects $Y$ transversely in exactly $m_Y$ points.
\end{lemma}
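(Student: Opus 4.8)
The plan is to prove this by a standard generic-smoothness / generic-transversality argument, treating the two varieties $X$ and $Y$ separately since the conditions on the join line are symmetric. Recall that $\EJ(X,Y)$ being $(m_X,m_Y)$-joined means, by definition, that a general join line $L$ satisfies $\length(X\cap L)=m_X$ and $\length(Y\cap L)=m_Y$. The content of the lemma is to upgrade ``general'' (a single generic point) to the existence of an honest open dense set, and simultaneously to promote the length condition to genuine transversality, i.e.\ that the $m_X$ (resp.\ $m_Y$) points are reduced and the intersection is transverse.

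First I would set up the two incidence schemes recording the intersection data. For $X$, consider the subscheme $\{(x,l): x\in X\cap l\}\subset X\times JL(X,Y)$, which is just (a suitable closure of) $N(X)$ pushed into $\PP^n\times G$ via $\overline{pr}_{13}$; its projection to $JL(X,Y)$ has generically finite fibers of length $m_X$ by the definition of $(m_X,m_Y)$-joined and the finiteness of the general fiber of $\overline{pr}_3$ established in \S1.1. Since we are over an algebraically closed field, the locus in $JL(X,Y)$ over which the fiber has the expected length $m_X$ and consists of reduced points is constructible, and it is nonempty because the generic fiber already has length $m_X$; hence by upper semicontinuity of fiber length it contains an open dense subset $U_X\subseteq JL(X,Y)$. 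I would argue likewise for $Y$, obtaining $U_Y$, and then take $U=U_X\cap U_Y$, which is open and dense by irreducibility of $JL(X,Y)$.

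For the transversality claim, I would invoke generic smoothness of the projection $\overline{pr}_3:\Gamma(X,Y)\to JL(X,Y)$ (valid in characteristic zero; in positive characteristic one restricts to the separable locus), which guarantees that over a general $l$ the scheme $X\cap l$ is smooth of dimension zero, i.e.\ $m_X$ distinct reduced points. Transversality of the intersection $X\cap l$ inside $\PP^n$ then follows at each such reduced point because a zero-dimensional reduced intersection with a line forces the tangent space $T_{X,x}$ to meet the line only at $x$, so the scheme-theoretic intersection is reduced exactly when the intersection is transverse; this is where one uses that the pair is not strongly strange, so that a general join line is not tangent to $X$ or $Y$. Shrinking $U$ further to the locus where both intersections are transverse keeps it open and dense.

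The main obstacle I expect is the positive-characteristic case, where generic smoothness can fail and the map $\overline{pr}_3$ may be inseparable; there the na\"ive argument only yields the correct \emph{length} $m_X$ but not reducedness or transversality of the fiber. Since the paper assumes $k$ algebraically closed but does not fix the characteristic, I would either restrict to characteristic zero for the transversality statement, or note (as the excerpt suggests Ballico did) that the length bookkeeping survives regardless and that the transversality refinement is what genuinely needs the separability hypothesis. The remaining steps---semicontinuity of fiber length and the passage from ``generic point'' to ``open dense set''---are routine and I would not grind through them.
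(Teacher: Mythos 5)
Your route is genuinely different from the paper's. The paper simply transplants Dale's argument: it lists four ``bad'' loci in $JL(X,Y)$ --- join lines meeting $\mathrm{Sing}(X)\cup\mathrm{Sing}(Y)$, join lines tangent to $X$ or $Y$, join lines with $\length(l\cap X)>m_X$ or $\length(l\cap Y)>m_Y$, and join lines contained in $X\cup Y$ --- asserts that each is a proper closed subset, and takes $U$ to be the complement of their union. You instead run semicontinuity of fiber length plus generic smoothness. In characteristic zero your argument does yield the lemma, modulo one technical slip: generic smoothness must be applied to the incidence projection $\{(x,l):x\in X\cap l\}\to JL(X,Y)$ (essentially $pr_{13}(N(X))\to JL(X,Y)$), whose fiber over $l$ is the scheme $X\cap l$, and not to $\overline{pr}_3:\Gamma(X,Y)\to JL(X,Y)$, whose fiber over $l$ is a set of pairs $(x,y)$ spanning $l$ and says nothing directly about reducedness of $X\cap l$. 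You set up the correct incidence scheme in your first paragraph and then invoked smoothness for the wrong map in the second.

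The genuine gap is the one you flag yourself and then decline to close: the lemma carries no hypothesis on the characteristic, the paper's standing assumption is only that $k$ is algebraically closed, and the entire framework being generalized --- separability of $\alpha_X$, $\alpha_Y$, $\beta$, strange and constrained pairs --- is vacuous in characteristic zero, so positive characteristic is precisely the case of interest. There your argument produces only the length statement, and both of your fallbacks (restrict to characteristic zero, or weaken the conclusion to length bookkeeping) prove a different statement, not this one. Note how the paper's proof localizes all of the characteristic-$p$ difficulty in a single claim: that the tangency locus $JL(X,Y)\cap(T(X)\cup T(Y))$ is a \emph{proper} closed subset. For Dale's original case (secants of a curve) this is a characteristic-free dimension count, $\dim T(X)\le 1<2=\dim S(X)$, with no appeal to generic smoothness. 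Your instinct that some non-strangeness assumption is lurking here is in fact well founded --- in characteristic $2$, taking $X$ a smooth conic and $Y$ its strange point, every join line meets $X$ in a single point of multiplicity $2$, so the tangency locus is all of $JL(X,Y)$ and the conclusion fails --- but that hypothesis appears in neither the statement nor the paper's proof, so importing it means proving yet a third statement. A proof of the lemma as stated must engage directly with properness of the tangency locus inside $JL(X,Y)$, which is exactly what the bad-locus decomposition isolates and what generic smoothness cannot reach in positive characteristic.
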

\begin{proof}
    The aforementioned intuition is that we just need to swap out the correct versions of Dale's ``bad" secants; indeed, the proof is identical once we do so.

    Consider the diagram $\begin{tikzcd}[column sep=small]
        X\times Y&\arrow[l]\Gamma(X,Y)\arrow[r]&JL(X,Y)
    \end{tikzcd}$ and consider the ``bad" join lines:
    \begin{enumerate}[label=(\roman*)]
        \item $\overline{pr}_3(\overline{pr}_{12}^{-1}(\mathrm{Sing}(X)\times Y\cup X\times\mathrm{Sing}(Y)))=\{$join lines intersecting Sing$(X)\cup$Sing$(Y)\}$;
        \item $JL(X,Y)\cap( T(X)\cup T(Y))=\{$ join lines which are tangent to either $X$ or $Y\}$ ($T(X)\subset G$ is the set of tangent lines to $X$);
        \item $JL_{>(m_X,m_Y)}(X,Y)=\{$ join lines $l$ for which either $\length(l\cap X)>m_x$ or $\length(l\cap Y)>m_Y\}$;
        \item $JL_\infty(X,Y)=\{$join lines which are contained in $X\cup Y\}$.
    \end{enumerate}
    Each of these is proper and closed in $JL(X,Y)$, so we can take $U$ to be their complement.
\end{proof}
\subsection{Terracini's Lemma}
\AA dlandsvik (\cite{dlandsvik1987JoinsAH}) has previously extended Terracini's lemma to embedded joins, alongside many other authors in various generalities (for instance \cite{Terracinipaper} and \cite{terracinipaper2}).
\begin{lemma}(Terracini's Lemma)
    \begin{enumerate}[label=(\roman*)]\label{terracini}
        \item Let $X\subset\PP^n$ be a reduced and irreducible projective variety, possibly singular, over an algebraically closed field $k$. Let $x,y$ be distinct points of $X,Y$, respectively, and let $z$ be any point on the join line $\overline{xy}$. Then
        \begin{equation}
            \span(T_{X,x},T_{Y,y})\subseteq T_{\EJ(X,Y),z}\label{Terracini}
            \end{equation}  
        \item If $\mathrm{char}(k)=0$, there is an open, dense subset $U\subset\EJ(X,Y)$ such that for any $z\in U$ and for any $(x,y)\in (X\times Y-\Delta)$ for which $\overline{xy}$ passes through $z$, equality holds in \ref{Terracini}.
    \end{enumerate}
\end{lemma}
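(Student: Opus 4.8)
The plan is to read both parts off the explicit ruling of the ruled join together with the projection $\pi\colon\RJ(X,Y)\to\EJ(X,Y)$, which is nothing but the projectivization of the ``add the two cone points'' map. Recall that $\RJ(X,Y)=\EJ(X',Y')$ for isomorphic copies $X',Y'$ of $X,Y$ placed in complementary $n$-planes of $\PP^{2n+1}$, and that $\pi$ is the restriction of the linear projection $pr$ from the $n$-plane $V(a_0-b_0,\dots,a_n-b_n)$. Since this center is disjoint from each coordinate plane, $pr$ maps every join line $\overline{x',y'}$ of $\RJ(X,Y)$ onto the join line $\overline{x,y}$ of $\EJ(X,Y)$ and restricts to isomorphisms $X'\xrightarrow{\sim}X$ and $Y'\xrightarrow{\sim}Y$. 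I will use throughout that $d\pi$ therefore carries $T_{X',x'}$ isomorphically onto $T_{X,x}$ and $T_{Y',y'}$ onto $T_{Y,y}$, hence $\span(T_{X',x'},T_{Y',y'})$ onto $\span(T_{X,x},T_{Y,y})$.

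For part (i), fix $x\in X$, $y\in Y$ with $x\neq y$ and a point $z\neq x,y$ on $\overline{x,y}$; let $x',y'$ be the corresponding points of $X',Y'$ and let $w\in\overline{x',y'}\subseteq\RJ(X,Y)$ be the lift of $z$. First I would record the ``honest join'' inclusion $\span(T_{X',x'},T_{Y',y'})\subseteq T_{\RJ(X,Y),w}$, obtained by differentiating the ruling parametrization $(x'',y'',t)\mapsto$ the point of $\overline{x'',y''}$ cut out by $t\in\PP^1$: varying $x''$ sweeps out $T_{X',x'}$, varying $y''$ sweeps out $T_{Y',y'}$, the fibre direction already lies in their span, and the image of a differential lands in the Zariski tangent space of the image. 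This step uses no smoothness hypothesis, so it is valid for the possibly singular $x,y$ allowed in the statement. Applying $d\pi_w$, whose image lies in $T_{\EJ(X,Y),z}$ because $\pi$ is a morphism near $w$, and using the previous paragraph, gives exactly $\span(T_{X,x},T_{Y,y})\subseteq T_{\EJ(X,Y),z}$, i.e.\ \eqref{Terracini}. The endpoints $z=x,y$ then follow by upper semicontinuity of the tangent space along $\overline{x,y}$, since the fixed subspace $\span(T_{X,x},T_{Y,y})$ is contained in $T_{\EJ(X,Y),z}$ for all interior $z$.

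For part (ii) the key input is generic smoothness, and this is the only place where $\mathrm{char}(k)=0$ is used. Over the interior of its ruling and over smooth points $x',y'$, the join $\RJ(X,Y)$ is itself smooth: the two complementary linear projections of $\PP^{2n+1}$ recover $x'$ and $y'$ uniquely from an interior point of $\overline{x',y'}$, so the ruling parametrization is injective there, and the spans $T_{X',x'},T_{Y',y'}$ lie in complementary subspaces, forcing $T_{\RJ(X,Y),w}=\span(T_{X',x'},T_{Y',y'})$ to have the expected dimension $\dim X+\dim Y+1$. Restricting $\pi$ to this dense smooth interior locus $V$, a dominant morphism from a smooth variety in characteristic zero, generic smoothness yields a dense open $U_0\subseteq\EJ(X,Y)$ over which $\pi|_V$ is a smooth morphism; in particular $d\pi_w$ is surjective onto $T_{\EJ(X,Y),z}$ at every $w\in V$ lying over a point $z\in U_0$. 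Combined with $d\pi_w\big(T_{\RJ(X,Y),w}\big)=\span(T_{X,x},T_{Y,y})$, this surjectivity upgrades the inclusion of part (i) to the equality $T_{\EJ(X,Y),z}=\span(T_{X,x},T_{Y,y})$.

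I expect the main obstacle to be the quantifier ``for any $(x,y)$'': generic smoothness controls only the smooth interior points of $\RJ(X,Y)$, whereas the statement demands equality along \emph{every} join line through a general $z$. I would settle this exactly as in the transversality lemma above: the join lines tangent to $X$ or $Y$, or meeting $\mathrm{Sing}(X)\cup\mathrm{Sing}(Y)$, form a proper closed subset of $JL(X,Y)$ whose union in $\EJ(X,Y)$ is again proper and closed. Deleting this union from $U_0$ and intersecting with the smooth locus of $\EJ(X,Y)$ produces the open set $U$ of the statement: for $z\in U$, every pair $(x,y)$ with $z\in\overline{x,y}$ consists of smooth points and yields an interior smooth $w\in\pi^{-1}(z)$, so the surjectivity of $d\pi_w$ applies to all of them simultaneously. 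This symmetric bookkeeping over the two factors $X$ and $Y$ is the only genuine departure from Dale's secant argument.
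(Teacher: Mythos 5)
Your overall route differs from the paper's: the paper takes its proof verbatim from Dale, who differentiates the affine parametrization $\sigma:X\times Y\times\mathbb{A}^1\to\mathbb{A}^n$, $(x,y,t)\mapsto tx+(1-t)y$ (this is exactly the map recorded in \S 2.2), and applies generic smoothness to $\sigma$ itself; you instead factor everything through $\RJ(X,Y)$ and $\pi$. That is a legitimate and even attractive alternative, since it ties the lemma to the paper's main object $\pi$, and your part (i) is sound: the differential argument works on Zariski tangent spaces at singular points, and the extension to $z\in\{x,y\}$ works because containment of a fixed linear space in $T_{\EJ(X,Y),z}$ is a closed condition in $z$. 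One soft spot: your justification that $\RJ(X,Y)$ is smooth at interior points over smooth $(x',y')$ does not actually prove it. Injectivity of the ruling parametrization plus the containment $\span(T_{X',x'},T_{Y',y'})\subseteq T_{\RJ(X,Y),w}$ only bounds $\dim T_{\RJ(X,Y),w}$ from \emph{below}; nothing you say rules out a strictly larger tangent space. The correct (and quick) argument is that the affine cone over $\RJ(X,Y)$ is the product of the affine cones over $X$ and $Y$ inside $k^{n+1}\oplus k^{n+1}$, so smoothness at $w$ follows from smoothness of a product at a pair of smooth points.

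The genuine gap is in your last paragraph, the step you yourself flagged as the main obstacle. You assert that the join lines tangent to $X$ or $Y$, or meeting $\mathrm{Sing}(X)\cup\mathrm{Sing}(Y)$, sweep out a \emph{proper} closed subset of $\EJ(X,Y)$, and you delete it to get $U$. That properness claim is unjustified and is false in general. For tangent lines: whenever the join is defective, Fulton--Hansen/Zak type results give that the tangent lines fill up the whole join; already for $X=Y$ the Veronese surface in $\PP^5$ (smooth!) one has $\operatorname{Tan}(X)=\Sec(X)$, so your deletion empties $U$ entirely, and an empty $U$ is not the dense $U$ the statement requires. This half is easily repaired, because your generic-smoothness argument never needed tangency removed: a pair of smooth points $(x,y)$ with $z$ interior gives $w\in V$ regardless of tangency. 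For lines through singular points the problem is real: the union of lines $\overline{xy}$ with $x\in\mathrm{Sing}(X)$ is essentially $\EJ(\mathrm{Sing}(X),Y)$, and this can equal all of $\EJ(X,Y)$ --- e.g.\ $X\subset\PP^3$ the tangent developable of a twisted cubic (singular along the cubic $Z$), $Y$ a line, where $\EJ(Z,Y)=\PP^3=\EJ(X,Y)$. In such cases you cannot excise the pairs with $x$ or $y$ singular, yet the statement quantifies over \emph{all} pairs $(x,y)$ through $z\in U$, and your generic-smoothness argument (which lives on the smooth locus of $\RJ(X,Y)$) says nothing about them. Covering those pairs needs an additional idea --- this is precisely the point at which deferring to ``delete the bad locus'' does not reduce to the transversality lemma, and where Dale's actual argument (which the paper adopts by reference) has to do something your reconstruction does not.
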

The proof is identical to Dale's after making appropriate replacements, so we neglect to rewrite it except to clarify one construction which we will use in the next subsection.
Let $\psi:\mathbb{A}^n\times \mathbb{A}^n\times\mathbb{A}^1\to \mathbb{A}^n\times\mathbb{A}^n\times\mathbb{A}^n$ be the morphism given by $\psi(x,y,t)=(x,y,(tx_0+(1-t)y_0,\dots,tx_n+(1-t)y_n))$, where $x=(x_1,\dots,x_n)$ and similarly for $y$. Let $\sigma=pr_3\circ\psi:\mathbb{A}^n\times\mathbb{A}^n\times\mathbb{A}^1\to\mathbb{A}^n$.
\subsection*{2.3 Inseparability}
Let $\begin{tikzcd} \varphi_X=(pr_1,\sigma|_{X\times Y\times\mathbb{A}^1}):X\times Y\times\mathbb{A}^1\arrow[r,dashed]& W(X)\end{tikzcd}$ such that $(x,y,t)\mapsto (x,tx+(1-t)y)$, and define $\begin{tikzcd}\varphi_Y=(\sigma|_{X\times Y\times\mathbb{A}^1},pr_2):X\times Y\times\mathbb{A}^1\arrow[r,dashed]& W(Y)\end{tikzcd}$ similarly. These maps are rational and are only defined on the intersection of $X$ and $Y$ with an appropriate affine patch of $\PP^n$. Both $\varphi_X$ and $\varphi_Y$ dominate their targets.
\begin{lemma}
    The morphisms $\varphi_X$ and $\varphi_Y$ are generically unramified.
\end{lemma}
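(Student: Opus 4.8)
The plan is to establish generic unramifiedness by a direct computation of the tangent map of $\varphi_X$ at a general point, using that the source $X\times Y\times\mathbb{A}^1$ and the target $W(X)$ both have dimension $\dim(X)+\dim(Y)+1$. For a dominant morphism of varieties of equal dimension, at a point $u$ where both $u$ and its image are smooth, being unramified is equivalent to the vanishing of the relative differentials at $u$, hence to injectivity of the tangent map $d(\varphi_X)_u$ (at such a point injectivity already forces an isomorphism of tangent spaces). Since the smooth loci are dense and $\varphi_X$ is dominant, it suffices to produce a single general point $(x,y,t)$ at which $d(\varphi_X)_{(x,y,t)}$ has trivial kernel, and then invoke the openness of the unramified locus.

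First I would compute the differential in the affine coordinates used to define $\sigma$ in \S2.2, where $\sigma(x,y,t)=tx+(1-t)y$. Writing a tangent vector to $X\times Y\times\mathbb{A}^1$ as a triple $(v,w,s)$ with $v\in T_{X,x}$, $w\in T_{Y,y}$ (linear parts) and $s\in k$, and using $\varphi_X(x,y,t)=(x,\,tx+(1-t)y)$, one obtains
\[
d(\varphi_X)_{(x,y,t)}(v,w,s)=\bigl(v,\ tv+(1-t)w+s(x-y)\bigr).
\]
Setting this equal to zero forces $v=0$ from the first coordinate, after which the second reads $(1-t)w+s(x-y)=0$. For general $t$ we have $t\neq1$, so a nonzero $s$ would place the direction $x-y$ in the linear tangent space to $Y$ at $y$, i.e.\ would give $x\in T_{Y,y}$; thus the kernel is trivial precisely when $x\notin T_{Y,y}$, that is, when the join line $\overline{xy}$ is not tangent to $Y$ at $y$.

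This non-tangency holds at a general point by the transversality lemma of \S2.1: a general join line meets $Y$ transversally (in $m_Y$ points), and since $\varphi_X$ dominates $W(X)$ the general point of the source lies over such a line, giving $x\notin T_{Y,y}$ there. The case of $\varphi_Y$ is symmetric: one finds $d(\varphi_Y)_{(x,y,t)}(v,w,s)=\bigl(tv+(1-t)w+s(x-y),\,w\bigr)$, so that $w=0$ and the kernel is trivial exactly when $y\notin T_{X,x}$, again furnished by \S2.1 for a general point.

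I expect the main obstacle to be the translation in the third paragraph: verifying that the transversality guaranteed for a general \emph{join line} really delivers the linear condition $x\notin T_{Y,y}$ at a general \emph{point of the source} $X\times Y\times\mathbb{A}^1$, while simultaneously confirming that such a point avoids the special loci $t\in\{0,1\}$ and the singular loci of the source and of $W(X)$. This is also the step governed by characteristic: in positive characteristic the generic non-tangency $x\notin T_{Y,y}$ can fail for strange pairs, so the clean injectivity above is exactly the phenomenon controlled by the strangeness notions of \S1.3, and some care is needed to see that the relevant pathologies do not intervene.
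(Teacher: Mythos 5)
Your Jacobian computation is correct, and it is necessarily the heart of any proof of this lemma: at a smooth point with $t\neq 1$ (resp.\ $t\neq 0$) the kernel of $d\varphi_X$ (resp.\ $d\varphi_Y$) is nontrivial exactly when $x\in T_{Y,y}$ (resp.\ $y\in T_{X,x}$), so everything reduces to generic non-tangency of the join line at the relevant point. The paper itself offers no argument to compare against (it defers wholesale to Dale), and up to this reduction you are certainly on the same track as Dale's secant computation.

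The genuine gap is in how you finish, and it matters precisely in positive characteristic, which is the only case of interest (\S2.3 is about inseparability). You obtain $x\notin T_{Y,y}$ by citing the transversality lemma of \S2.1; but the only justification that lemma's proof gives for its key claim --- that the join lines tangent to $X$ or to $Y$ form a \emph{proper} closed subset of $JL(X,Y)$ --- is bare assertion, and that assertion is essentially the very statement you need, so your argument shifts the whole burden onto an unproved (near-circular) step. Worse, the assertion is false under the paper's standing hypotheses: in characteristic $2$ take $Y=V(x_0^2-x_1x_2)\subset\PP^2$ and $X$ its strange point $[1{:}0{:}0]$; every tangent line of $Y$ passes through $X$, so \emph{every} join line is tangent to $Y$, and in affine coordinates $\varphi_X$ becomes $(u,s)\mapsto(su,s/u)$, purely inseparable of degree $2$, hence everywhere ramified --- the lemma itself fails. (If a zero-dimensional $X$ bothers you, take instead $X=V(x_1,x_2)\subset\PP^3$ and $Y$ the quadric cone $V(x_0^2-x_1x_2)\subset\PP^3$, all of whose tangent planes contain the line $X$.) Dale's secant case survives in characteristic $p$ for reasons that do not transfer to joins: for secants one has $\dim T(X)\leq 2\dim(X)-1<\dim S(X)$, and generic ramification would force $X\subseteq T_{X,y}$, hence $X$ linear, which is excluded; for joins the dimension count breaks as soon as $\dim X\neq\dim Y$, and generic ramification only forces $T_{Y,y}\supseteq\span(X)$ for all smooth $y$, which nothing in \S1 rules out. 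So your closing hope that ``the relevant pathologies do not intervene'' cannot be realized: in characteristic $p$ the statement needs an extra hypothesis (for instance, that $Y$ is not strongly strange for $\span(X)$ and $X$ is not strongly strange for $\span(Y)$), under which your differential computation does immediately finish the proof. In characteristic $0$ your argument is fine as written; the defect it has in characteristic $p$ is one it shares with, and inherits from, the paper.
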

The proof of this lemma is again identical to Dale's after we make the appropriate substitutions. 

Continuing in the same vein, we can build a subset $G(X)\subset W(X)\subset\PP^n\times\PP^n$ as follows: $(x,z)\in G$ if and only if
\begin{enumerate}[label=(\alph*)]
    \item $(x,z)\in \mathrm{Reg}(W(X))$
    \item $x\neq z$
    \item there exists $y\in\overline{x,z}\cap Y$ with $x\notin T_{Y,y}$
\end{enumerate}
Then $G(X)$ is a constructible, dense subset of $W(X)$. We similarly construct $G(Y)$ in $W(Y)$.
\begin{proposition}
    Let $(x,z)\in G(X)$ ad let $y\in (\overline{zx}\cap Y)$ be any point such that $x\notin T_{Y,y}$. Then $x\in\mathrm{Reg}(X)$ and $y\in\mathrm{Reg}(Y)$, and
    \begin{enumerate}[label=(\roman*)]
        \item $pr_1(T_{W(X),(x,z)})=T_{X,x}$
        \item $T_{W(X),(x,z)}\cap\{x\}\times\PP^n=\{x\}\times\span(T_{Y,y},x)$
    \end{enumerate}
    We have assumed that $x\neq z$ and $x\neq y$. If, in addition, $z\neq y$, we also have
    \begin{enumerate}[label=(\roman*),resume]
        \item $pr_1(T_{W(X),(x,z)})=\span(T_{X,x},T_{Y,y})$
        \item $T_{W(X),(x,z)}\cap\PP^n\times\{z\}=\span(T_{X,x}\cap T_{Y,y},x)\times\{z\}$
    \end{enumerate}
    And symmetrically for $G(Y)$.
\end{proposition}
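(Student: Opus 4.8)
The plan is to fix $(x,z)\in G(X)$, take the point $y\in\overline{zx}\cap Y$ supplied by condition (c) (note $y\in T_{Y,y}$ while $x\notin T_{Y,y}$, so $x\neq y$), choose $t_0$ with $z=t_0x+(1-t_0)y$, and analyze the differential of $\varphi_X$ at the lift $(x,y,t_0)$. Working in an affine patch on which $\varphi_X$ is a morphism and in which $x,y,z$ all lie, the formula $\varphi_X(x,y,t)=(x,\,tx+(1-t)y)$ gives, in ambient coordinates,
$$d\varphi_X(u,v,s)=\bigl(u,\ t_0u+(1-t_0)v+(x-y)s\bigr),$$
where $u\in T_{X,x}$, $v\in T_{Y,y}$, $s\in k$, the embedded tangent spaces being viewed as direction subspaces of the patch (the natural map from the Zariski tangent space is injective, so its image has dimension $\dim T_{X,x}$, etc.). Since $\varphi_X$ lands in $W(X)$, the image of $d\varphi_X$ is contained in $T_{W(X),(x,z)}$, and the whole proposition reduces to showing this image \emph{equals} $T_{W(X),(x,z)}$ and then projecting and slicing.

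First I would show $d\varphi_X$ is injective at $(x,y,t_0)$. Its kernel forces $u=0$ and $(1-t_0)v+(x-y)s=0$; condition (b) gives $t_0\neq 1$, so $v\in k(x-y)\cap T_{Y,y}$, and condition (c), $x\notin T_{Y,y}$, forces $v=0$ and $s=0$. Hence the image has dimension $\dim T_{X,x}+\dim T_{Y,y}+1$. But it sits inside $T_{W(X),(x,z)}$, whose dimension is $\dim W(X)=\dim X+\dim Y+1$ by the regularity condition (a). Comparing dimensions yields $\dim T_{X,x}=\dim X$ and $\dim T_{Y,y}=\dim Y$, so $x\in\mathrm{Reg}(X)$, $y\in\mathrm{Reg}(Y)$, and $d\varphi_X$ is an isomorphism onto $T_{W(X),(x,z)}$. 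This is precisely the generic unramifiedness of $\varphi_X$ made explicit at our chosen point.

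With the description
$$T_{W(X),(x,z)}=\{(u,\ t_0u+(1-t_0)v+(x-y)s):u\in T_{X,x},\,v\in T_{Y,y},\,s\in k\}$$
in hand, (i)--(iv) are read off by projecting and slicing. Taking $pr_1$ gives $T_{X,x}$, which is (i). Setting $u=0$ and using $t_0\neq1$ makes the second coordinate sweep out $T_{Y,y}+k(x-y)$, which, based at $z$ on the line $\overline{x,y}$, is $\span(T_{Y,y},x)$; this is (ii). When in addition $z\neq y$ (so $t_0\neq0$) the term $t_0u$ contributes all of $T_{X,x}$, hence $pr_2$ sweeps out $T_{X,x}+T_{Y,y}+k(x-y)=\span(T_{X,x},T_{Y,y})$, giving (iii); and slicing by $\PP^n\times\{z\}$ forces $t_0u+(1-t_0)v+(x-y)s=0$, i.e. $u\in T_{X,x}\cap(T_{Y,y}+k(x-y))$, giving (iv). The statement for $G(Y)$ is identical with $X$ and $Y$ interchanged.

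The hard part will be the final identification in (iv): that the direction space $T_{X,x}\cap(T_{Y,y}+k(x-y))$ coincides with that of $\span(T_{X,x}\cap T_{Y,y},x)$, where the projective-versus-affine bookkeeping is delicate. I would verify it by passing to affine cones $\widehat{T}_{X,x},\widehat{T}_{Y,y}\subset k^{n+1}$, writing $\widehat{T}_{X,x}=\langle\widehat x\rangle\oplus(\{0\}\times A)$ with $A$ the affine direction of $T_{X,x}$ (and similarly $B$ for $Y$), and computing that the ``infinity part'' of $(\widehat{T}_{X,x}\cap\widehat{T}_{Y,y})+\langle\widehat x\rangle$ is exactly $A\cap(B+k(x-y))$; condition (c) enters here in guaranteeing $\widehat x\notin\widehat{T}_{X,x}\cap\widehat{T}_{Y,y}$, so that adjoining $x$ genuinely enlarges the intersection, as the formula predicts.
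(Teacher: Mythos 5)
Your proposal is correct and follows essentially the same route as the paper's (i.e., Dale's) proof: you analyze the differential of $\varphi_X$ at a lift $(x,y,t_0)$ of $(x,z)$, show it is injective using conditions (b) and (c), and compare dimensions against $\dim W(X)=\dim X+\dim Y+1$ at the regular point $(x,z)$ to conclude that the image is all of $T_{W(X),(x,z)}$, from which (i)--(iv) are read off by projecting and slicing. The only structural difference is cosmetic: you inline the unramifiedness computation at the chosen point rather than citing the paper's lemma that $\varphi_X$ is generically unramified (which is exactly the ingredient the paper says this proof requires), and you correctly interpret item (iii), which as printed repeats $pr_1$, as a statement about $pr_2$.
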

Once again, the proof is identical after making appropriate notational changes; this proof is where we needed $\varphi_X$ and $\varphi_Y$ to be generically unramified.
\begin{corollary}\label{Dale3.4}
    The morphism $\alpha_X:W(X)\to \EJ(X,Y)$ is separably generated if and only if $X$ and $Y$ are not a constrained pair. Symmetrically, the same is true for $\alpha_Y:W(Y)\to\EJ(X,Y)$.
\end{corollary}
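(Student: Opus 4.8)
The plan is to reduce the corollary to a comparison of two dimensions via the tangent-space computation of the preceding Proposition. I would begin by recalling the standard criterion: a dominant morphism of irreducible varieties over an algebraically closed field is separably generated exactly when the generic rank of its differential equals the dimension of the target, i.e. when the differential is surjective at a general point. Since $\alpha_X=pr_2|_{W(X)}$, the image of $d\alpha_X$ at $(x,z)$ is $pr_2(T_{W(X),(x,z)})$, and part (iii) of the Proposition identifies this image with $\span(T_{X,x},T_{Y,y})$ whenever $(x,z)\in G(X)$, a valid $y$ exists, and $z\neq y$. So the task becomes computing the generic value of $\dim\span(T_{X,x},T_{Y,y})$ and comparing it with $\dim\EJ(X,Y)$.

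Next I would check that a general point of $W(X)$ is associated to a \emph{general} pair $(x,y)\in X\times Y$, so that $\dim\span(T_{X,x},T_{Y,y})$ attains its maximal value. For this I would use $M(X,Y)$: the projections $\rho:M(X,Y)\to X\times Y$, $(x,z,y,l)\mapsto(x,y)$, and $\mu:M(X,Y)\to W(X)$, $(x,z,y,l)\mapsto(x,z)$, are both dominant. Letting $U\subseteq X\times Y$ be the dense open locus where $\dim(T_{X,x}\cap T_{Y,y})=t(X,Y)$, the set $\rho^{-1}(U)$ is dense in $M(X,Y)$, so $\mu(\rho^{-1}(U))$ contains a dense open of $W(X)$; intersecting this with $G(X)$ and the dense loci where $x\neq z$ and $z\neq y$ produces a dense set of points $(x,z)$ on which the Proposition applies and the accompanying $(x,y)$ lies in $U$. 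On that set $x\in\mathrm{Reg}(X)$ and $y\in\mathrm{Reg}(Y)$, so $\dim\span(T_{X,x},T_{Y,y})=\dim(X)+\dim(Y)-t(X,Y)$, and this is the generic rank of $d\alpha_X$.

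Finally I would make the comparison. By Terracini's Lemma \ref{terracini}(i) the image of $d\alpha_X$ lies inside $T_{\EJ(X,Y),z}$, which has dimension $\dim\EJ(X,Y)$ at a general (smooth) $z$, so $d\alpha_X$ is generically surjective if and only if $\dim(X)+\dim(Y)-t(X,Y)=\dim\EJ(X,Y)$; by the definition of a constrained pair this is exactly the condition that $X,Y$ be \emph{not} constrained, which proves the corollary, and the argument for $\alpha_Y$ is identical with $X$ and $Y$ interchanged. I expect the main obstacle to be the general-point matching of the second paragraph: I must rule out that a general point of $W(X)$ corresponds only to special pairs with $\dim(T_{X,x}\cap T_{Y,y})>t(X,Y)$, which would drop the generic rank of $d\alpha_X$ below $\dim(X)+\dim(Y)-t(X,Y)$ and spoil the comparison. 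The dominance of $\rho$ and $\mu$ through $M(X,Y)$ is precisely what I would invoke to prevent this.
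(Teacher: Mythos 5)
Your proposal is correct and takes essentially the same route as the paper, whose proof is simply Dale's argument for his Corollary 3.4 transcribed into join notation: one compares the generic rank of $d\alpha_X$, computed as $\dim\span(T_{X,x},T_{Y,y})=\dim(X)+\dim(Y)-t(X,Y)$ via the tangent-space Proposition, against $\dim\EJ(X,Y)$ using Terracini's lemma, and equality is exactly the condition that $X,Y$ not be a constrained pair. Your dominance argument through $M(X,Y)$ supplies the generality-matching step that the paper leaves implicit (one small refinement: intersect with the open locus $\{x\notin T_{Y,y}\}$ upstairs in $M(X,Y)$, which is nonempty because $G(X)$ is dense, rather than with $G(X)$ downstairs, so that the \emph{same} witness $y$ satisfies both $(x,y)\in U$ and the Proposition's hypothesis).
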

The proof is identical to Dale's after substituting our notation; just as this proof motivated Dale's use of constrained varieties (his only works if the tangent spaces of generic points of $X$ do not meet, i.e. $X$ is unconstrained), the modified proof under our generalization to embedded joins naturally motivates the definition of a constrained pair.
\begin{lemma}\label{Dale3.5}
    Assume that $\EJ(X,Y)$ is $(m_X,m_Y)$-joined. Then the morphism 
    $$\overline{pr}_{13}:N(Y)\to JB(X,Y)$$
    is separable of degree $m_Y$, and the morphism
    $$\overline{pr}_{23}:N(X)\to JB(X,Y)$$
    is separable of degree $m_X$.
\end{lemma}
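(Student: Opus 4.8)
The plan is to prove the statement for $\overline{pr}_{23}\colon N(X)\to JB(X,Y)$; the assertion for $\overline{pr}_{13}\colon N(Y)\to JB(X,Y)$ then follows by interchanging the roles of $X$ and $Y$. First I would record that both $N(X)$ and $JB(X,Y)$ are irreducible of dimension $\dim(X)+\dim(Y)+1$: the former because $\overline{pr}_{12}$ restricts to an isomorphism of dense open subsets $N(X)\cong W(X)$, the latter as recorded in \S1. Hence $\overline{pr}_{23}$ is a dominant morphism of irreducible varieties of equal dimension, so it is generically finite and both its degree and its separability are well-posed.

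For the degree, I would transport the map across the birational identification $\overline{pr}_{12}\colon N(X)\cong W(X)$, under which $\overline{pr}_{23}$ becomes $(x,z)\mapsto(z,\overline{xz})$. Its set-theoretic fiber over a point $(z,l)$ is $\{(x,z,l):x\in X\cap l\}$. Taking $U\subset JL(X,Y)$ to be the dense open locus furnished by the transversality lemma at the start of \S2.1, for every $(z,l)$ with $l\in U$ this fiber consists of exactly $m_X$ reduced points. Since the preimage of $U$ is dense in $JB(X,Y)$, the general fiber has precisely $m_X$ points, so once separability is established the degree equals $m_X$ (for a generically finite dominant morphism of varieties over an algebraically closed field, the number of points in a general fiber equals the separable degree).

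The main work, and the step I expect to be the genuine obstacle, is separability; it is the only place where the hypotheses are truly used rather than notation merely swapped, and the only place where positive characteristic could in principle intervene. I would prove that $\overline{pr}_{23}$ is generically unramified by showing its differential is injective at a general point. Choose $(x,z)\in G(X)$ general and $y\in\overline{zx}\cap Y$ with $x\notin T_{Y,y}$ and $z\neq y$, so that part (iv) of the preceding Proposition applies. A tangent vector in the kernel of $d(\overline{pr}_{23})$ must fix both $z$ and the line $\overline{xz}$; fixing $z$ forces it into $T_{W(X),(x,z)}\cap(\PP^n\times\{z\})=\span(T_{X,x}\cap T_{Y,y},x)\times\{z\}$, so its first component lies in $\span(T_{X,x}\cap T_{Y,y},x)\subseteq T_{X,x}$, while additionally fixing $\overline{xz}$ forces that component to point along $\overline{xz}$. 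By the transversality lemma a general join line is not tangent to $X$ at $x$, i.e.\ $\overline{xz}\not\subseteq T_{X,x}$, whence $\overline{xz}\cap T_{X,x}=\{x\}$; thus the only such component is trivial and the kernel vanishes. Therefore $\overline{pr}_{23}$ is generically unramified, hence separable, and combined with the fiber count it is separable of degree $m_X$. The symmetric argument, using part (iv) of the Proposition for $G(Y)$ and the transversality of a general join line with $Y$, shows that $\overline{pr}_{13}\colon N(Y)\to JB(X,Y)$ is separable of degree $m_Y$.
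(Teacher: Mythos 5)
Your proof is correct, but since the paper's own ``proof'' of this Lemma is literally the single sentence ``the same as Dale's proof after changing notation,'' the real comparison is with the notation-swapped argument the paper is pointing to, and there your route differs in one half. Your degree count is exactly the intended one: over the dense open set of points $(z,l)\in JB(X,Y)$ with $l$ in the good locus $U$ of the \S 2.1 transversality lemma, the fiber of $\overline{pr}_{23}$ is $X\cap l$, hence consists of $m_X$ distinct points. For separability, however, you detour through the birational model $W(X)$ and part (iv) of the Proposition, which drags in the auxiliary point $y\in\overline{xz}\cap Y$ and the space $T_{Y,y}$. This is legitimate---the Proposition does precede the Lemma in the paper's logical order, and your chain $\span(T_{X,x}\cap T_{Y,y},x)\subseteq T_{X,x}$ together with $T_{X,x}\cap\overline{xz}=\{x\}$ really does kill the kernel---but it is heavier than necessary: the kernel of $d(\overline{pr}_{23})$ at $(x,z,l)$ is precisely the Zariski tangent space of the scheme-theoretic fiber, i.e.\ of $X\cap l$ at $x$, and the very transversality statement you already invoked says this vanishes. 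So the single fact ``the general fiber is $m_X$ reduced points'' yields unramifiedness, separability, and the degree simultaneously, with no reference to $W(X)$, $G(X)$, or $T_{Y,y}$; that streamlined version is what ``Dale's proof after changing notation'' amounts to. Your extra step costs nothing logically, so the proof stands; it is just a longer path to the separability half, and it slightly obscures the fact that the whole Lemma is a formal consequence of the transversality lemma alone.
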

\begin{proof}
    Again, the same as Dale's proof after changing notation.
\end{proof}
By applying our Corollary \ref{Dale3.4} and Lemma \ref{Dale3.5} to the lower two squares of diagram \ref{Dale1.7}, we get
\begin{theorem}
    \begin{enumerate}[label=(\roman*)]
        \item The morphisms $\alpha_X$ and $\alpha_Y$ are separably generated if and only if $\beta$ is separably generated, and this is so if and only if $X,Y$ is not a constrained pair.
        \item Assume that $\EJ(X,Y)$ is $(m_X,m_Y)$-joined, and assume that $\dim(\EJ(X,Y))=\dim(X)+\dim(Y)+1$. Then
        $$\deg\alpha_X=m_X\deg\beta\hspace{.5in}\mathrm{and}\hspace{.5in}\deg\alpha_Y=m_Y\deg\beta$$
    \end{enumerate}
\end{theorem}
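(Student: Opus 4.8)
The plan is to read off both assertions from the two squares of diagram~\eqref{Dale1.7} that share $\beta$, after passing to function fields. I will work with the square on vertices $N(X)$, $W(X)$, $JB(X,Y)$, $\EJ(X,Y)$ and maps $\overline{pr}_{12}$, $\overline{pr}_{23}$, $\alpha_X$, $\beta$; the square for $Y$ is handled verbatim by exchanging the roles of $X$ and $Y$. Commutativity gives $\alpha_X\circ\overline{pr}_{12}=\beta\circ\overline{pr}_{23}$. Since $\overline{pr}_{12}$ restricts to an isomorphism of dense opens (so is birational) and $\overline{pr}_{23}$ is separable of degree $m_X$ by Lemma~\ref{Dale3.5}, identifying $k(W(X))$ with $k(N(X))$ along $\overline{pr}_{12}^{*}$ produces a tower of finitely generated field extensions
$$k(\EJ(X,Y))\subseteq k(JB(X,Y))\subseteq k(N(X))\cong k(W(X)),$$
in which the first inclusion is $\beta^{*}$, the second is $\overline{pr}_{23}^{*}$, and the top step $k(N(X))/k(JB(X,Y))$ is finite separable. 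Under the dimension hypothesis of (ii) all three of $JB(X,Y)$, $W(X)$, $\EJ(X,Y)$ have dimension $\dim(X)+\dim(Y)+1$, so $\beta$ and $\alpha_X$ are generically finite, and separable generation and degree of each map depend only on the corresponding step of this tower.

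For part~(i), Corollary~\ref{Dale3.4} already gives that $\alpha_X$ (resp.\ $\alpha_Y$) is separably generated if and only if $X,Y$ is not a constrained pair, so it suffices to prove that $\beta$ is separably generated if and only if $\alpha_X$ is. I would do this with K\"ahler differentials. Writing $K=k(\EJ(X,Y))$, $F=k(JB(X,Y))$, and $L=k(N(X))$, the extension $L/F$ is finite separable, hence $\Omega_{L/F}=0$ and the first fundamental exact sequence collapses to an isomorphism $\Omega_{F/K}\otimes_F L\cong\Omega_{L/K}$, giving $\dim_F\Omega_{F/K}=\dim_L\Omega_{L/K}$. As $L/F$ is algebraic, $\operatorname{trdeg}_K F=\operatorname{trdeg}_K L=\dim\EJ(X,Y)$, and a finitely generated extension $E/K$ is separably generated exactly when $\dim_E\Omega_{E/K}=\operatorname{trdeg}_K E$. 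Comparing these two equalities shows at once that $F/K$ is separably generated if and only if $L/K\cong k(W(X))/K$ is, i.e.\ $\beta$ is separably generated if and only if $\alpha_X$ is; combined with Corollary~\ref{Dale3.4} this closes the chain of equivalences.

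For part~(ii), I would use multiplicativity of degree along the same tower. The degree of a generically finite dominant map is the degree of its function-field extension, these multiply, $\overline{pr}_{12}$ has degree $1$ and $\overline{pr}_{23}$ has degree $m_X$ by Lemma~\ref{Dale3.5}, so
$$\deg\alpha_X=[k(W(X)):K]=[L:K]=[L:F]\,[F:K]=m_X\,[F:K]=m_X\deg\beta,$$
and symmetrically $\deg\alpha_Y=m_Y\deg\beta$ from the square for $Y$.

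The only genuinely delicate point is the backward implication in~(i): descending separable generation from $N(X)$ down to $JB(X,Y)$. One cannot simply restrict a separating transcendence basis of $L/K$ to $F$, since such a basis need not lie in $F$, and this is exactly what forces the differential argument—the separability (\'etaleness) of the top step $L/F$ is what makes the fundamental sequence left-exact and lets the computation of $\dim\Omega$ pass between the two floors of the tower. Everything else is bookkeeping: confirming generic finiteness under the dimension hypothesis and using the birationality of $\overline{pr}_{12}$ to replace $W(X)$ by $N(X)$ throughout, with the $Y$-square argued identically.
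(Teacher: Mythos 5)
Your proposal is correct and follows essentially the same route as the paper, which proves the theorem precisely by applying Corollary~\ref{Dale3.4} and Lemma~\ref{Dale3.5} to the two lower squares of diagram~\eqref{Dale1.7} sharing $\beta$ (the paper leaves the function-field bookkeeping implicit, deferring to Dale). Your K\"ahler-differential argument for transferring separable generation across the finite separable step $k(N(X))/k(JB(X,Y))$, together with multiplicativity of degrees in the tower, is exactly the content that one-line proof relies on.
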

\subsection{Degree of the Embedded Join of a Curve}
A version of the following theorem is recorded as proposition 2.5 in \cite{Achilles}. Those authors examine curves over $\mathbb{C}$, where the only strange varieties are linear space, hence they do not mention strangeness. Because we can have varieties $X$ and $Y$ of different dimensions, Dale's theorem 4.1 generalizes much more broadly than the results recorded above. The proof follows Dale's closely, until we must contend with possibilities arising from allowing $Y$ to have greater than 1 dimension.

\begin{theorem}\label{Dale4.1}
    Let $X,Y\subset\PP^n$ be reduced and irreducible, assume $X$ is a nondegenerate curve, $\dim(Y)\leq n-3$, $\EJ(X,Y)$ is $(m_Y,1)$-joined, and either
    \begin{enumerate}[label=(\alph*)]
        \item $Y$ is not the closure of a union of any set of lines, or 
        \item $F_1(Y)=\{L\subset Y\}\subset G(1,n)$ has strictly smaller dimension than $Y$.
    \end{enumerate}
    (In particular, (b) is satisfied if $\dim(Y)\leq 3$ and $Y$ is not a 1-parameter family of 2-planes.) Then $\beta:JB(X,Y)\to\EJ(X,Y)$ is birational if and only if $X,Y$ are not a strange pair.
\end{theorem}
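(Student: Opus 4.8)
The plan is to rephrase birationality of $\beta\colon JB(X,Y)\to\EJ(X,Y)$ as a statement about join lines and then prove the two implications separately. Since $\dim JB(X,Y)=\dim(X)+\dim(Y)+1$ and $\beta$ is dominant, $\beta$ is birational exactly when, for a general $z\in\EJ(X,Y)$, there is a unique join line through $z$ (generic injectivity) and $\beta$ is separable at $z$. I would work throughout over the dense locus furnished by the transversality lemma of \S2.1, so that every relevant join line meets $Y$ in a single reduced point and $X$ in $m_Y$ reduced points, and over the locus where the tangent-space computations of \S2.3 and the equality in Terracini's Lemma (\ref{terracini}) are valid.

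For the implication ``not a strange pair $\Rightarrow$ birational'' I would establish separability and generic injectivity in turn. Separability I would extract from the machinery of \S2.3: Corollary \ref{Dale3.4} and Lemma \ref{Dale3.5} show $\beta$ is separably generated precisely when $X,Y$ is not a constrained pair, and for a curve $X$ one checks, using the description of $t(X,Y)$ in \S1.3, that $X,Y$ is constrained exactly when it is strange. For generic injectivity I would argue by contradiction: if a general $z$ lay on two distinct join lines $\overline{x_1y_1}$ and $\overline{x_2y_2}$, then, using the separability just established so that Terracini's Lemma holds with equality, the fixed space $T_{\EJ(X,Y),z}$ equals both $\span(T_{X,x_1},T_{Y,y_1})$ and $\span(T_{X,x_2},T_{Y,y_2})$; since $X$ is a curve each $T_{X,x_i}$ is a tangent line, and tracking these two lines inside $T_{\EJ(X,Y),z}$ as $z$ varies I would follow Dale's curve argument to force every tangent line of $X$ through a common point $p$ and every tangent space of $Y$ to contain $p$, i.e.\ to make $X,Y$ a strange pair --- a contradiction.

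The step where this departs from Dale, and the one I expect to be the main obstacle, is controlling the configurations that arise only because $\dim(Y)>1$. A second join line through $z$ need not contribute a new point of the curve $X$: it may reuse $x_1$ and pair it with a different point of $Y$ collinear with $x_1$ and $z$, and a whole positive-dimensional family of join lines can sweep out of a ruling of $Y$. This is exactly what hypotheses (a) and (b) exclude: using $\dim F_1(Y)<\dim(Y)$ I would show that the lines lying on $Y$ cannot account for a positive-dimensional fibre of $\beta$ nor for generic doubling, reducing the count to finitely many honest configurations so that the Terracini/tangent-line argument above applies as in the curve-curve case.

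For the converse, ``strange pair $\Rightarrow$ not birational'', I note first that since $X$ is a nondegenerate curve the common center must be a single point $p$ lying on every tangent line of $X$ and in every tangent space of $Y$. Projection from $p$ then has everywhere-degenerate differential along both $X$ and $Y$, so $\pi_p|_X$ and $\pi_p|_Y$ are inseparable; comparing $\beta$ with the corresponding map for the projected pair $(\pi_p(X),\pi_p(Y))$ shows that the family of join lines is parametrized inseparably, so $\beta$ acquires inseparable degree greater than $1$ (equivalently, $X,Y$ is then constrained and $\beta$ fails to be separably generated). Hence $\beta$ is not birational even though it may remain bijective on closed points, and the equivalence follows. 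Keeping the inseparable degree bookkeeping distinct from set-theoretic multiplicity of the fibre is the secondary delicate point.
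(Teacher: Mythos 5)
Your overall architecture does match the paper's: both reduce birationality to (i) separability of $\beta$, obtained from the chain strange pair $\Leftrightarrow$ constrained pair $\Leftrightarrow$ $\beta$ not separably generated (Corollary \ref{Dale3.4}), and (ii) uniqueness of the join line through a general point of $\EJ(X,Y)$, proved by a Terracini-based contradiction in which hypotheses (a)/(b) rule out families of lines on $Y$. Step (ii) is precisely the paper's Lemma \ref{Dale4.2}, and your converse direction, despite the detour through projection from the strange center $p$, ultimately rests on the same separability equivalence that the paper invokes in one line (if $\deg(\beta)=1$ then $\beta$ is separable, hence $X,Y$ is not a strange pair).

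The gap is in the mechanism of the injectivity step. You claim that tracking the two tangent lines of $X$ inside $T_{\EJ(X,Y),z}$ ``forces every tangent line of $X$ through a common point $p$ and every tangent space of $Y$ to contain $p$,'' i.e.\ produces strangeness as the contradiction. That is not what Dale's argument (or any visible variant of it) yields, and there is no apparent route to it: two join lines through each general point only give containments of tangent spaces inside the single fixed space $H_0=\span(T_{X,x_0},T_{Y,y_0})$; nothing propagates this to \emph{all} tangent lines of $X$ being concurrent. What the argument actually produces is a dichotomy as $z$ moves along $L_0\cap U$: either the second join line varies with $z$, in which case infinitely many points of $X$ lie in $H_0$, hence $X\subset H_0$, contradicting nondegeneracy of $X$ --- and this only works because non-constrainedness supplies a join line with $T_{X,x_0}\cap T_{Y,y_0}=\emptyset$, so that $\dim H_0=\dim(Y)+2\leq n-1$, which is exactly where the hypothesis $\dim(Y)\leq n-3$ enters (your sketch never uses it, a symptom of the gap); or the second join line passes through a fixed point $x_1\neq x_0$, in which case $\overline{x_0,x_1,y_0}\cap Y$ is an infinite plane curve of degree $m_Y=1$, i.e.\ a line on $Y$, and varying $x_0$ produces enough lines on $Y$ to contradict (a) or (b) via $\dim F_1(Y)\geq\dim(Y)$. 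Note also that the configuration you single out as the main new difficulty --- a second join line ``reusing'' the same point of $X$ but paired with a different point of $Y$ collinear with it and $z$ --- is vacuous for the fiber of $\beta$: such data determine the same line $\overline{x_1,z}$, hence the same join line; the genuinely new phenomenon when $\dim(Y)>1$ is the stationary case just described.
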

\begin{proof}
    Under these circumstances, $X,Y$ are not a strange pair if and only if they are not a constrained pair, if and only if $\beta$ is separable. If $\deg(\beta)=1$, it must be separable and therefore $X,Y$ are not a strange pair. Now supposing that $X,Y$ are not a strange pair, it is enough to show that the number of points in the general fiber of $\beta$ is 1. This is equivalent to the following lemma.
\end{proof}
\begin{lemma}\label{Dale4.2}
    Let $X,Y\subset\PP^n$ as above, and assume $X,Y$ are not a strange pair. Then there is an open, dense subset $U\subset\EJ(X,Y)$ such that through any point $z\in U$ there passes exactly one join line of $\EJ(X,Y)$.
\end{lemma}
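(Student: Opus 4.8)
The plan is to prove the equivalent statement $\deg(\beta)=1$. By the discussion opening the proof of Theorem \ref{Dale4.1}, the hypothesis that $X,Y$ is not a strange pair makes $\beta$ separable, so its general fiber is reduced; since this fiber is exactly the set of join lines through a general $z\in\EJ(X,Y)$, and at least one such line always exists, it suffices to show there is at most one. I would argue by contradiction, assuming that a general $z$ lies on two distinct join lines $L_1$ and $L_2$.

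Writing $L_i=\overline{x_i y_i}$ with $x_i\in X$ and $y_i\in Y$, the first step is to pin down the configuration. Because $\EJ(X,Y)$ is $(m_Y,1)$-joined, a general join line meets $Y$ in a single point, so $y_i$ is determined by $L_i$, and since $z$ is general we may take $z\neq x_i,y_i$. If $y_1=y_2$, then $L_1$ and $L_2$ share the two distinct points $z$ and $y_1$, forcing $L_1=L_2$; hence $y_1\neq y_2$, and the same argument applied to the $x_i$ shows $x_1\neq x_2$. Thus $x_1,x_2\in X$ and $y_1,y_2\in Y$ are two pairs of distinct points, all lying in the $2$-plane $\Pi=\span(L_1,L_2)$.

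The engine of the proof is projection from $L_1$. Let $\rho\colon\PP^n\dashrightarrow\PP^{n-2}$ be the projection with center $L_1$, and set $X'=\rho(X)$, $Y'=\rho(Y)$. Since $\Pi\supseteq L_1$, the plane $\Pi$ is contracted to a single point, so $\rho(x_2)=\rho(y_2)$ is a point of $X'\cap Y'$. Fixing a general $L_1$ and letting $z$ range over $L_1$ (a general point of each such line is a general point of $\EJ(X,Y)$), the second line $L_2=L_2(z)$, and hence the point $\rho(x_2(z))=\rho(y_2(z))$, must vary; otherwise $x_2$ and $y_2$ would take finitely many values, $L_2$ would be constant, and $z$ would be pinned. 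This produces a positive-dimensional subset of $X'\cap Y'$. As $X$ is a nondegenerate curve, $X'$ is an irreducible curve, so $X'\subseteq Y'$; equivalently $X\subseteq\EJ(L_1,Y)$, the join of the line $L_1$ with $Y$. The hypothesis $\dim(Y)\leq n-3$ is what makes this a genuine degeneration: in $\PP^{n-2}$ the expected dimension of $X'\cap Y'$ is $1+\dim(Y')-(n-2)\leq 0$, so a positive-dimensional intersection is excess.

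The main obstacle is the final step, translating $X'\subseteq Y'$ into a contradiction, and here I expect two cases. If the containment is forced by $Y$ being swept out by lines, so that the extra join lines meeting $L_1$ arise from a ruling of $Y$ rather than from the geometry of $X$, then it contradicts hypothesis (a) or (b); this is precisely the new possibility, absent for secants of a single curve, that allowing $\dim(Y)>1$ creates, and bounding $F_1(Y)$ is exactly what rules it out. Otherwise, differentiating $X'\subseteq Y'$ at a general $x\in X$ with matched point $y=y(x)\in Y$ (using that $\rho$ is generically unramified on $X$ and on $Y$) yields $T_{X,x}\subseteq\span(L_1,T_{Y,y})$; running this over all join lines $L_1$ and tracking the correspondence $x\mapsto y(x)$ should force every tangent space of $X$ and of $Y$ to contain a common line, i.e. that $X,Y$ is a strange pair, contrary to assumption. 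Carrying out this tangent-space bookkeeping, separating the ruled case cleanly and producing the common linear space, is the delicate part and the point at which the argument diverges most from Dale's.
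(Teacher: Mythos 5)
Your reduction to $\deg(\beta)=1$ and your setup (two join lines through a general $z$, with distinct $x_i$, distinct $y_i$, spanning a plane $\Pi$) are fine, and projection from $L_1$ is a genuinely different engine from the paper's, which runs entirely on Terracini's lemma. But there are two real gaps. First, the claim that $\rho(x_2(z))=\rho(y_2(z))$ ``must vary'' as $z$ moves along $L_1$ is false, and your justification (constant projection would force $L_2$ constant and $z$ pinned) overlooks the one configuration that actually matters: all of the second lines $L_2(z)$ can pass through a single fixed point $x_1\in X$ while sweeping out a plane curve in $Y\cap\span(x_1,L_1)$. Then $L_2(z)$ varies and $y_2(z)$ takes infinitely many values, yet every $L_2(z)$ lies in the plane $\span(x_1,L_1)$, which $\rho$ contracts to one point, so the projected point is constant and you get no positive-dimensional subset of $X'\cap Y'$ at all. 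This escape is exactly the paper's second case (there the plane curve has degree $m_Y=1$, so $Y$ contains lines, and an incidence count gives $\dim(F_1(Y))\geq\dim(Y)$, contradicting (a)/(b)); note that it also slips through your final dichotomy, which is conditioned on the containment $X'\subseteq Y'$ already having been established.

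Second, even when the projected point does vary, your endgame is not a proof: you concede that extracting a strange pair from $X'\subseteq Y'$ is ``the delicate part'' and only assert it ``should'' work. This is precisely where your route is weaker than the paper's. Your containment $X\subseteq\EJ(L_1,Y)$ places $X$ inside a \emph{non-linear} variety of dimension at most $n-1$, so nondegeneracy of $X$ gives nothing directly, and you are pushed into the unproven tangent-space bookkeeping. The paper instead spends the non-strangeness hypothesis at the outset: since $X,Y$ is not a strange (equivalently, under these hypotheses, constrained) pair, a general $z_0$ lies on a join line $L_0=\overline{x_0,y_0}$ with $T_{X,x_0}\cap T_{Y,y_0}=\emptyset$, so $H_0=\span(T_{X,x_0},T_{Y,y_0})$ is a \emph{linear} space of dimension $\dim(Y)+2\leq n-1$; Terracini's lemma forces $T_{\EJ(X,Y),z}=H_0$ along $L_0$, which traps the $X$-point of every second join line inside $H_0$, and if those points vary then $X\subseteq H_0$ contradicts nondegeneracy immediately. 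If you want to keep the projection framework, you must (i) split off the pinned-plane case explicitly and dispatch it with the $F_1(Y)$ hypotheses, and (ii) replace the projection target by a linear trap (project from a hyperplane containing $H_0$, or simply invoke Terracini), since otherwise the non-strange hypothesis never actually enters the portion of the argument you can complete.
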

\begin{proof}
    There exists an open, dense subset $U\subseteq\mathrm{Reg}(\EJ(X,Y))$ such that the following hold.
    \begin{enumerate}[label=(\alph*)]
        \item Through any $z\in U$ there passes a fixed number $\deg(\beta)\geq1$ of join lines of $\EJ(X,Y)$. (Apply EGA IV 9.7.8 to $\beta$.)
        \item Any join line through $z\in U$ intersects $X$ and $Y$ in exactly $m_X$ and $m_Y$ regular points, respectively.
        \item Through any $z\in U$ there passes at least one join line $L=\overline{xy}$ such that $T_{X,x}\cap T_{Y,y}=\emptyset$.
    \end{enumerate}
    Now assume that $\deg(\beta)\geq2$. Let $z_0\in U$. We know that $z_0$ lies on a secant $L_0=\overline{x_0,y_0}$ such that the dimension of $H_0=\span(T_{X,x_0},T_{Y,y_0})$ is $\dim(Y)+2\leq n-1$. Terracini's lemma says $H_0=T_{\EJ(X,Y),z}$ for any $z\in L_0\cap U$. Picking general $z\neq z_0$ in $L_0$, there must be a join line $L=\overline{x_1,y}\ni z$, and Terracini now says that $\span(T_{X,x_1},T_{Y,y})\subseteq H_0$. In particular, $x_1\neq x_0$. 
    
    Two things can now happen. As we vary $z$ along $L_0\cap U$, $\overline{x,y}$ also varies. In this case, we get $x\in\span(T_{X,x},T_{Y,y})\subseteq H_0 $ for infinitely many points $x\in X$; it follows that $X\subset H_0$, contradicting our assumption that $X$ is nondegenerate. Therefore, we may assume the other possibility: as $z$ varies along $L_0\cap U$, we have $\overline{x,y}\ni x_1$; this happens if $\overline{x_0,x_1,y_0}\cap Y$ has infinitely many points, in which case it will be a plane curve. These plane curves will generally have the fixed degree $m_Y=1$. Then for general $y\in Y$, we can vary $x_0$, and we get a 1-parameter family of lines through $y$ contained in $X$; a simple incidence correspondence argument shows that $\dim(F_1(Y))\geq\dim(Y)$, but also $\dim(F_1(X))\geq2$. This contradicts assumptions (a) or (b), so we can conclude that $\deg(\beta)=1$.

    We now consider the ``in particular" statement. Since $\dim(F_1(Y))\geq2$, we may assume $Y$ is not a curve. We have the classical result that $\dim(F_1(Y))=2\dim(Y)-2$ if and only if $Y$ is a linear space, and otherwise $\dim(F_1(Y))\leq 2\dim(Y)-3$. Beniamino Segre's work in \cite{Segre} shows that $\dim(F_1(Y))=2\dim(Y)-3$ if and only if $Y$ is a quadric hypersurface or $Y$ is a 1-parameter family of 2-planes. We have assumed that $Y$ is neither, so $$\dim(F_1(Y))\leq2\dim(Y)-4\leq\dim(Y)-1<\dim(Y)$$
    This guarantees assumption (b) is satisfied.
\end{proof}
\begin{remark}
    The argument in the last paragraph of the above proof could be extended using further results on the shape of $X$ when $\dim(F_1(X))=2\dim(X)-2-N$ for larger $N$, such as those found in \cite{Rogora}.

    The entire proof can be extended to the case where $\mathrm{EJ}(X,Y)$ is $(m_X,m_Y)$-joined with $m_Y>1$, by replacing criteria (a) and (b) with corresponding statements about plane curves of degree $m_Y$.
\end{remark}
These assumptions provide criteria for testing whether Theorem \ref{Dale4.1} could be applied to a given $Y$. General varieties pass both criteria. Cones over general varieties fail criterion (a) but pass criterion (b). A rational normal $k$-fold scroll with $k\geq3$ fails both criteria. 

Ballico has a version of Theorem \ref{Dale4.1} with a much simpler set of hypotheses (Theorem 1 in \cite{EBallico}); this seems to come from the fact those hypotheses are equivalent to $\deg(\beta)=1$. We can slightly extend Ballico's work by observing that we only need the nondegeneracy of $X$ and $Y$ to conclude that $\dim(\EJ(X,Y))=\dim(X)+\dim(Y)+1$; but this will happen if and only if $X\cup Y$ is nondegenerate, so we don't need them to be individually nondegenerate.
\begin{proposition}
    Assume $X,Y\subset\PP^n$, $X$ a curve, $\dim(Y)\leq n-3$, and $X\cup Y$ nondegenerate. Then general points in $\EJ(X,Y)$ are contained in a unique join line if and only if for general $(a,b,c)\in X\times X\times Y$, $\overline{a,b,c}\cap Y=\{c\}$ (set theoretically).
\end{proposition}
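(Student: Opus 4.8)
The plan is to translate both sides of the claimed equivalence into dominance statements for the two projections of a single incidence correspondence, and then to play them off against the dimension coincidence
$$\dim\EJ(X,Y)=\dim(X)+\dim(Y)+1=1+1+\dim(Y)=\dim(X\times X\times Y),$$
which holds here precisely because $X\cup Y$ is nondegenerate, as recorded in the discussion preceding the statement. Writing (U) for ``a general point of $\EJ(X,Y)$ lies on a unique join line'' and (P) for ``a general $(a,b,c)$ has $\overline{a,b,c}\cap Y=\{c\}$,'' I would prove the contrapositive equivalence $\neg(\mathrm U)\iff\neg(\mathrm P)$. Note that $\neg(\mathrm U)$ says exactly that $\deg(\beta)\geq 2$, i.e. that a general point of $\EJ(X,Y)$ carries at least two join lines, while $\neg(\mathrm P)$ says that the ``bad'' locus $\{(a,b,c):\overline{a,b,c}\cap Y\supsetneq\{c\}\}$ is dense in $X\times X\times Y$.

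First I would introduce the correspondence
$$\mathcal Z=\overline{\{(a,b,c,c',z): a,b\in X,\ c,c'\in Y,\ c'\in\overline{a,b,c}\setminus\{c\},\ z\in\overline{a,c}\cap\overline{b,c'},\ z\notin X\cup Y\}}$$
together with the projections $p(a,b,c,c',z)=(a,b,c)$ to $X\times X\times Y$ and $q(a,b,c,c',z)=z$ to $\EJ(X,Y)$. The geometric dictionary underlying the whole argument is that two distinct join lines meeting at a point $z\notin X\cup Y$ are automatically coplanar, so a pair $(\overline{a,c},\overline{b,c'})$ of such lines is the same datum as a plane $\overline{a,b,c}$ spanned by two points of $X$ and two points of $Y$; conversely a second point $c'\in\overline{a,b,c}\cap Y$ produces the second join line $\overline{b,c'}$ through $z=\overline{a,c}\cap\overline{b,c'}$. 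From this I read off that $q$ is dominant if and only if a general point of $\EJ(X,Y)$ carries a second join line, i.e. $\neg(\mathrm U)$, and that $p$ is dominant if and only if a general $(a,b,c)$ admits an extra point $c'$, i.e. $\neg(\mathrm P)$. The restriction $z\notin X\cup Y$ is what discards the ``trivial'' coincidences where both lines meet at a common point of $X$ or of $Y$; it is harmless because $\dim(X\cup Y)<\dim\EJ(X,Y)$, so a general point of $\EJ(X,Y)$ avoids $X\cup Y$.

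With the two interpretations in hand, the equivalence falls out of the dimension coincidence provided both projections are generically finite. Indeed, if $q$ is generically finite then $q$ is dominant iff $\dim\mathcal Z=\dim\EJ(X,Y)=\dim(Y)+2$, and if $p$ is generically finite then $p$ is dominant iff $\dim\mathcal Z=\dim(X\times X\times Y)=\dim(Y)+2$; since the two targets have the same dimension, $p$ and $q$ are simultaneously dominant, which is exactly $\neg(\mathrm P)\iff\neg(\mathrm U)$. Generic finiteness of $q$ is immediate: a general point of $\EJ(X,Y)$ lies on finitely many join lines because $\beta$ is generically finite, and each such join line meets the curve $X$ and (since it is not contained in $Y$) the variety $Y$ in finitely many points.

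The main obstacle is the generic finiteness of $p$, whose fibre over $(a,b,c)$ is identified with $\overline{a,b,c}\cap Y\setminus\{c\}$; this is finite exactly when the $2$-plane $\overline{a,b,c}$ does not meet $Y$ along a curve. For a general $2$-plane this is automatic from $\dim(Y)\leq n-3$, since then $2+\dim(Y)<n$ pushes the expected intersection dimension below zero; the real work is to exclude the degenerate possibility that $\deg(\beta)\geq 2$ while the special planes spanned by two points of $X$ and a point of $Y$ systematically cut $Y$ in curves over only a \emph{proper} subvariety of $X\times X\times Y$ (which is the only way $p$ could fail to be generically finite without already forcing $\neg(\mathrm P)$). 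I would rule this scenario out exactly as in the last paragraph of the proof of Lemma \ref{Dale4.2}: such a family of planes would sweep out a family of plane curves in $Y$, and an incidence argument would bound $\dim(F_1(Y))$ from below in a way incompatible with the Segre-type classification invoked there for a nondegenerate $Y$ of this dimension. Once this case is excluded, $p$ is generically finite, and the dimension argument closes the equivalence.
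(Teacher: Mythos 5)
Your overall architecture differs from the paper's (the paper proves the ($\impliedby$) direction by citing \cite{EBallico}, and the ($\implies$) direction by a short synthetic argument sweeping the plane $\overline{a,b,c}$ by the lines $\overline{z,b}$, $z\in\overline{a,c}$), but your incidence-correspondence scheme is unsound for one concrete reason: $\mathcal Z$ does not remember that the two join lines are \emph{distinct}. The condition $z\notin X\cup Y$ fails to exclude tuples with $\overline{a,c}=\overline{b,c'}$: take $a=b$ and $c'\in(\overline{a,c}\cap Y)\setminus\{c\}$; then \emph{every} $z\in\overline{a,c}\setminus(X\cup Y)$ yields a point of $\mathcal Z$. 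Over this locus $z$ is a free parameter, so your identification of the $p$-fibre with $\overline{a,b,c}\cap Y\setminus\{c\}$ (which tacitly uses that $z$ is the unique intersection point of two \emph{distinct} coplanar lines) breaks, $p$ acquires $1$-dimensional fibres, and $q$ can dominate $\EJ(X,Y)$ through this locus while exhibiting only \emph{one} join line through each of its image points. This is not a removable technicality: it occurs exactly when the general join line meets $Y$ in two or more points ($m_Y\geq2$), and nothing in the Proposition's hypotheses excludes that a priori; condition (P) trivially forces $m_Y=1$, so ruling out $m_Y\geq2$ under (U) is part of what must be proved (the paper works over an arbitrary algebraically closed field, where such projection behavior is a live possibility). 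Concretely, in a hypothetical configuration with $m_Y\geq2$ and (U) both holding, every step you actually verify ($q$ dominant, $q$ generically finite on its dominating component, planes meeting $Y$ in finitely many points) can hold, yet your chain of equivalences would output $\neg$(U) --- a false conclusion --- so the two dictionaries ``$q$ dominant iff $\neg$(U)'' and ``$p$ dominant iff $\neg$(P)'' are false as stated and the dimension count resting on them collapses. (The second dictionary has further failure modes of the same kind: if the only witnesses $c'$ lie on $\overline{a,c}$, on $\overline{a,b}$, or on lines joining $b$ to other points of $X\cap\overline{a,c}$, the intersection point $z$ lands in $X\cup Y$ and the tuple is expelled from $\mathcal Z$, so $\neg$(P) need not make $p$ dominant.)

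Second, your plan to dispose of the genuine degeneration you do identify (``planes cutting $Y$ in curves over a proper subvariety'') ``exactly as in the last paragraph of the proof of Lemma \ref{Dale4.2}'' is not available here. That argument relies on the Segre classification of $F_1(Y)$ together with hypotheses of Theorem \ref{Dale4.1} that this Proposition deliberately drops: $X$ nondegenerate, $m_Y=1$ assumed, $X,Y$ not a strange pair, and condition (a) or (b) restricting the lines on $Y$. Under the present hypotheses $Y$ may be a plane, a scroll, or a cone over anything, so $F_1(Y)$ can be as large as you like and no contradiction of that type can be extracted. This is precisely why the paper does not reprove the ($\impliedby$) implication but cites Ballico, whose argument assumes (P) and therefore never needs to exclude such families; your route, by contrast, amounts to reproving Ballico's theorem from scratch, and the exclusion it requires is exactly the missing step.
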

\begin{proof}
    ($\impliedby$) See \cite{EBallico}.
    
    ($\implies$) Pick general $a,b\in X$ and $c\in Y$. For general $z\in \overline{a,c}$, we must have $\overline{z,c}\cap Y=\emptyset$, and hence $\overline{a,b,c}\cap Y=\{c\}$.
\end{proof}
The content of Dale's Theorem 4.3, generalized to embedded joins, is contained in Theorem 2.4 of \cite{dlandsvik1987JoinsAH}.
\section{Degree of $\pi:\RJ(X,Y)\dashrightarrow\EJ(X,Y)$}
As mentioned in \S1, the following sub-diagram of diagram \ref{Dale1.7} is a fiber square.
$$\begin{tikzcd}
    &M(X,Y)\arrow[dr,"\overline{pr}_{124}"]\arrow[dl,swap,"\overline{pr}_{234}"]\\
    N(X)\arrow[dr,"\overline{pr}_{23}"]&&
    N(Y)\arrow[dl,swap,"\overline{pr}_{13}"]\\
    &JB(X,Y)
\end{tikzcd}$$
This fact immediately gives the 
\begin{proposition}
    The morphism $\overline{pr}_{234}:M(X,Y)\to N(X)$ is separable of degree $m_Y$ and the morphism $\overline{pr}_{124}:M(X,Y)\to N(Y)$ is separable of degree $m_X$. 
\end{proposition}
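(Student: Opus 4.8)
The plan is to exploit the fiber-square structure directly. Since the displayed square is Cartesian, we have $M(X,Y)=N(X)\times_{JB(X,Y)}N(Y)$, and each projection out of $M(X,Y)$ is a base change of one of the two legs. Concretely, $\overline{pr}_{234}\colon M(X,Y)\to N(X)$ is the pullback of $\overline{pr}_{13}\colon N(Y)\to JB(X,Y)$ along $\overline{pr}_{23}\colon N(X)\to JB(X,Y)$, and symmetrically $\overline{pr}_{124}$ is the pullback of $\overline{pr}_{23}$ along $\overline{pr}_{13}$. By Lemma \ref{Dale3.5}, $\overline{pr}_{13}$ is separable of degree $m_Y$ and $\overline{pr}_{23}$ is separable of degree $m_X$, so the entire content of the proposition is that separability and degree survive this base change. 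I would prove the claim for $\overline{pr}_{234}$; the argument for $\overline{pr}_{124}$ is word-for-word the same after interchanging the roles of $X$ and $Y$ (and of $m_X$ and $m_Y$).

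First I would reduce to a statement about function fields. Set $K=k(JB(X,Y))$, $L=k(N(X))$, and $F=k(N(Y))$. Because $\overline{pr}_{13}$ is surjective (indeed $pr_{13}(N(Y))=JB(X,Y)$), it is dominant, and separability of degree $m_Y$ means precisely that $F/K$ is a finite separable field extension with $[F:K]=m_Y$. Since $\overline{pr}_{23}$ is likewise dominant, the generic point $\eta$ of $N(X)$ maps to the generic point of $JB(X,Y)$, so the generic fibre of $\overline{pr}_{234}$ over $\eta$ is
$$M(X,Y)\times_{N(X)}\operatorname{Spec}L\;=\;\operatorname{Spec}\bigl(F\otimes_{K}L\bigr).$$
Finite separability of $F/K$ guarantees that $F\otimes_{K}L$ is an étale $L$-algebra of rank $[F:K]=m_Y$; in particular it is reduced, and it is a finite product of finite separable field extensions of $L$.

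Next I would use the irreducibility of $M(X,Y)$ to pin this étale algebra down to a single field. All four varieties in the square have dimension $\dim X+\dim Y+1$ (for $M(X,Y)$ this is stated in \S1, and $\dim N(X)=\dim W(X)=\dim X+\dim Y+1$ since $\overline{pr}_{12}$ identifies dense opens of $N(X)$ and $W(X)$), so $\overline{pr}_{234}$ is a dominant morphism of irreducible varieties of equal dimension. Hence any point of $M(X,Y)$ lying over $\eta$ has closure dominating $N(X)$, and therefore of dimension $\geq\dim N(X)=\dim M(X,Y)$, forcing it to be the generic point of $M(X,Y)$. Thus the generic fibre is the single point $\operatorname{Spec}k(M(X,Y))$, so $F\otimes_{K}L$ must be a field, necessarily equal to $k(M(X,Y))$. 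Comparing with the previous paragraph, $k(M(X,Y))/L$ is separable of degree $m_Y$, i.e.\ $\overline{pr}_{234}$ is separable of degree $m_Y$, as claimed.

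The step that carries the actual content — and the one I would be most careful about — is the verification that $F\otimes_{K}L$ is a field of degree $m_Y$ over $L$, since this is where both hypotheses are indispensable. Separability of $\overline{pr}_{13}$ makes the base-changed algebra étale, hence reduced, so that no length is lost and the rank is genuinely $m_Y$; while irreducibility of $M(X,Y)$ rules out the a priori possibility of several factors and identifies the generic fibre with a single residue field. Without either input the naive expectation that ``base change preserves the degree'' can fail, so I would state explicitly that it is the combination of Lemma \ref{Dale3.5} with the already-established irreducibility of $M(X,Y)$ that makes it hold here.
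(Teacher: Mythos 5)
Your proposal is correct and takes essentially the same route as the paper: the paper offers no written argument at all, declaring the proposition an ``immediate'' consequence of the Cartesian square together with Lemma \ref{Dale3.5}, i.e.\ that separability and degree survive base change along $\overline{pr}_{23}$ and $\overline{pr}_{13}$. Your write-up simply supplies the verification the paper leaves implicit (\'etale base change of a separable extension, plus the already-established irreducibility and dimension of $M(X,Y)$ to force the generic fibre to be a single residue field), so it matches the intended proof rather than replacing it.
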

Collecting all of this information together, we get our main theorem.
\begin{theorem}\label{main}
    Let $X,Y\subset\PP^n$ be closed subschemes. 
    \begin{enumerate}[label=(\alph*)]
        \item The following are equivalent
        \begin{enumerate}[label=(\roman*)]
            \item The morphism $\begin{tikzcd}\pi:\RJ(X,Y)\arrow[r,dashed]&\EJ(X,Y)\end{tikzcd}$ is separably generated.
            \item The morphism $\alpha_X:W(X)\to\mathrm{EJ}(X,Y)$ is separably generated.
            \item The morphism $\alpha_Y:W(Y)\to\mathrm{EJ}(X,Y)$ is separably generated.
            \item The morphism $\beta:JB(X,Y)\to\mathrm{EJ}(X,Y)$ is separably generated.
            \item $X,Y$ is not a constrained pair.
        \end{enumerate}
        \item Assume that $\EJ(X,Y)$ is $(m_X,m_Y)$-joined, and assume that $\dim(\EJ(X,Y))=\dim(X)+\dim(Y)+1$. Then
        $$\deg(\pi)=m_Xm_Y\deg(\beta)$$
    \end{enumerate}
\end{theorem}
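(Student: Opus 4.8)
The plan is to obtain both parts by chasing the commutative diagram \ref{Dale1.7}, treating the constituent separability and degree statements as established and adding only the bookkeeping that assembles them. Everything hinges on comparing the two routes from $M(X,Y)$ to $\EJ(X,Y)$: the outer route $M(X,Y)\xrightarrow{\Phi}\RJ(X,Y)\xrightarrow{\pi}\EJ(X,Y)$ and the inner route $M(X,Y)\xrightarrow{\overline{pr}_{234}}N(X)\xrightarrow{\overline{pr}_{23}}JB(X,Y)\xrightarrow{\beta}\EJ(X,Y)$. Commutativity of \ref{Dale1.7} says these agree as rational maps; concretely both carry a general $(x,z,y,l)$ to $z$, since $\pi$ maps the ruled-join line isomorphically onto $l=\overline{x,y}$ preserving the parameter of $z$, while $\beta(z,l)=z$. (The parallel route through $N(Y)$ behaves identically.)

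For part (a), the equivalences (ii)$\iff$(iii)$\iff$(iv)$\iff$(v) are exactly part (i) of the Theorem derived above from Corollary \ref{Dale3.4} and Lemma \ref{Dale3.5}, so it remains only to incorporate (i). Because $\Phi$ is birational, $\pi$ and $\pi\circ\Phi$ induce the same extension of $k(\EJ(X,Y))$ up to the isomorphism $\Phi^{*}$, so $\pi$ is separably generated if and only if $\pi\circ\Phi$ is; by commutativity I may instead read this extension off the inner route, through the tower
$$k(\EJ(X,Y))\subseteq k(JB(X,Y))\subseteq k(N(X))\subseteq k(M(X,Y)).$$
Lemma \ref{Dale3.5} and the preceding Proposition make the top two steps finite and separable, hence $k(M(X,Y))/k(JB(X,Y))$ is finite separable. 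Separably generated extensions are transitive and are inherited by every subextension $M/K$ with $K\subseteq M\subseteq L$, so $k(M(X,Y))/k(\EJ(X,Y))$ is separably generated if and only if $k(JB(X,Y))/k(\EJ(X,Y))$ is, which is (i)$\iff$(iv).

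For part (b), the hypothesis $\dim\EJ(X,Y)=\dim(X)+\dim(Y)+1$ forces $M(X,Y)$, $N(X)$, $JB(X,Y)$, $\RJ(X,Y)$ and $\EJ(X,Y)$ to share this common dimension, so each morphism in the diagram is dominant and generically finite and degrees multiply along compositions. Computing the degree of the common map $M(X,Y)\to\EJ(X,Y)$ along the outer route gives $\deg(\pi)\deg(\Phi)=\deg(\pi)$, since $\Phi$ is birational; computing it along the inner route gives $\deg(\beta)\deg(\overline{pr}_{23})\deg(\overline{pr}_{234})=\deg(\beta)\,m_X\,m_Y$ by Lemma \ref{Dale3.5} and the preceding Proposition. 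Equating the two expressions gives $\deg(\pi)=m_Xm_Y\deg(\beta)$.

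The genuine content lives in the cited lemmas, so what remains is largely organizational, and I expect the one delicate point to be the field theory in part (a). The tower argument must be oriented so that it uses only transitivity of separable generation and its inheritance by subextensions over the base; the tempting but false move would be to claim that separable generation survives enlargement of the base field, which fails for transcendental extensions. Verifying commutativity of the two routes and the dimension count that licenses multiplicativity of degrees are routine by comparison.
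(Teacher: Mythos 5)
Your proposal is correct and takes essentially the same route as the paper: the paper's ``proof'' of Theorem \ref{main} is just the phrase ``collecting all of this information together,'' meaning exactly the diagram chase you perform --- the \S2 theorem for the equivalence of (ii)--(v), Lemma \ref{Dale3.5} and the fiber-square Proposition for the inner route $M(X,Y)\to N(X)\to JB(X,Y)\to \EJ(X,Y)$, and birationality of $\Phi$ plus commutativity of diagram \ref{Dale1.7} for the outer route through $\RJ(X,Y)$. Your explicit care with the field theory (using only transitivity of separable generation and its inheritance by subextensions over a fixed base, not base enlargement) supplies a detail the paper leaves unstated.
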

\section{Further Questions}
It is clear that by generalizing from secant varieties to embedded joins, some things become slightly nicer (e.g. $X,Y$ can both be strange, as long as they are not strange `in the same way') and some things become less nice (e.g. we do not recover all of Dale's elementary properties of constrained varieties). It is not obvious how this will affect the complexity of the answers to the following questions.

It would be nice to know when the different factors in $\deg(\pi)=m_Xm_Y\deg(\beta)$ are equal to 1, or if they can otherwise be bounded from below. As mentioned above, it is clear that $m_X=1$ if and only if $Y\not\subseteq\Sec(X)$, and similarly for $m_Y$; is there a simple way to test this without explicitly calculating the secant varieties? Determining precisely when $\beta:SB(X,Y)\to\EJ(X,Y)$ is birational, even just for a curve and a non-curve, may not have a nice answer; Ballico's version of our Theorem \ref{Dale4.1} (Theorem 1 in \cite{EBallico}) shows a direction one could move. Dale insists that even for very nice things, his lemma 4.2 (and presumably also the present paper's Lemma \ref{Dale4.2}) does not work (see the remark preceding theorem 4.3 in \cite{Dale}).

It could be interesting to find versions of Dale's elementary properties of constrained varieties which hold for constrained pairs.

Ballico has extended Dale's results to $\EJ(X,X,\dots,X)$, where $X$ is repeated $k$ times, called either the $k$-secant or $(k-1)$-secant variety of $X$ (\cite{Ballico2}). It may be possible to extend a fusion of the above results and those of Ballico to the embedded join of more than two varieties, $\EJ(X_1,X_2,\dots,X_r)$, after appropriate definitions of ``non-strange $r$-tuple" and modifications to the constructions from \S1, though the usefulness of such work is dubious at the moment.




\printbibliography
\noindent\textit{Email address}: jjbeckma@syr.edu\\
Department of Mathematics, Syracuse University, Syracuse NY 13244, USA
\end{document}